\theoremstyle{plain} 
\newtheorem{theorem}{\indent\sc Theorem}[section]
\newtheorem{lemma}[theorem]{\indent\sc Lemma}
\newtheorem{proposition}[theorem]{\indent\sc Proposition}
\theoremstyle{definition} 
\newtheorem{remark}[theorem]{\indent\sc Remark}
\newtheorem{example}[theorem]{\indent\sc Example}
\begin{document}

\title[title for the running head]{Ray class invariants over imaginary quadratic fields} 

\author[F. Author]{Ho Yun Jung} 

\author[S. Author]{Ja Kyung Koo} 

\author[C. Author]{Dong Hwa Shin} 

\subjclass[2000]{ 
Primary 11G16; Secondary 11F11, 11F20, 11G15, 11R37. }
%
\keywords{ 
Elliptic units, class field theory, complex multiplication, modular
forms.}
\thanks{ 
This research was supported by Basic Science Research Program
through the NRF of Korea funded by MEST (2010-0001654). The third
named author is partially supported by TJ Park Postdoctoral
Fellowship.}
\address{
Department of Mathematical Sciences, KAIST \endgraf Daejeon 373-1,
Korea} \email{DOSAL@kaist.ac.kr}

\address{
Department of Mathematical Sciences, KAIST \endgraf Daejeon 373-1,
Korea} \email{jkkoo@math.kaist.ac.kr}

\address{
Department of Mathematical Sciences, KAIST \endgraf
Daejeon 373-1,
Korea} \email{shakur01@kaist.ac.kr}


\maketitle

\begin{abstract}
Let $K$ be an imaginary quadratic field of discriminant less than or
equal to $-7$ and $K_{(N)}$ be its ray class field modulo $N$ for an
integer $N$ greater than $1$. We prove that singular values of
certain Siegel functions generate $K_{(N)}$ over $K$ by extending
the idea of our previous work (\cite{J-K-S}). These generators are
not only the simplest ones conjectured by Schertz (\cite{Schertz}),
but also quite useful in the matter of computation of class
polynomials. We indeed give an algorithm to find all conjugates of
such generators by virtue of \cite{Gee} and \cite{Stevenhagen}.
\end{abstract}

\section*{Introduction} 

Let $K$ be an imaginary quadratic field and
$\mathcal{O}_K=\mathbb{Z}[\theta]$ be the ring of integers with
$\theta$ in the complex upper half plane $\mathfrak{H}$. We denote
the Hilbert class field and the ray class field modulo $N$ of $K$
for a positive integer $N$ by $H$ and $K_{(N)}$, respectively. Hasse
(\cite{Hasse} or \cite[Chapter 10 Corollary to Theorem 7]{Lang})
found in 1927 that for any integral ideal $\mathfrak{a}$ of $K$,
$K_{(N)}$ is generated over $H$ by adjoining the value of the Weber
function for the elliptic curve $\mathbb{C}/\mathfrak{a}$ at a
generator of the cyclic $\mathcal{O}_K$-module
$(1/N)\mathfrak{a}/\mathfrak{a}$. It requires good understanding of
the arithmetic of elliptic curves, which is formulated by the theory
of complex multiplication (\cite[Chapter 10]{Lang} or \cite[Chapter
5]{Shimura}). Together with Shimura's reciprocity law which reveals
a remarkable relationship between class field theory and modular
function fields, the theory of Shimura's canonical model allows us
to generate $K_{(N)}$ over $K$ by the specialization of a certain
modular function field. In particular, Cho-Koo (\cite[Corollary
5.2]{C-K}) showed that the singular value of a Hauptmodul with
rational Fourier coefficients on some modular curve generates
$K_{(N)}$ over $K$. For instance, Cho-Koo-Park (\cite[Theorem
13]{C-K-P}) considered the case $N=6$ in terms of Ramanujan's cubic
continued fraction. Also Koo-Shin further provided in
\cite[pp.161--162]{K-S} appropriate Hauptmoduli for this purpose.
\par
It seems to be a difficult problem to construct a ray class
invariant (as a primitive generator of $K_{(N)}$) over $K$ by means
of values of a transcendental function which can be applied to all
$K$ and $N$. In 1964 Ramachandra (\cite[Theorem 10]{Ramachandra}) at
last found universal generators of ray class fields of arbitrary
moduli by applying the Kronecker limit formula. However his
invariants involve overly complicated products of high powers of
singular values of the Klein forms and singular values of the
discriminant $\Delta$-function. On the other hand, Schertz
(\cite[Theorems 3 and 4]{Schertz}) attempted to find simple and
better answers for practical use with similar ideas. The simplest
generators conjectured by Schertz (\cite[p.386]{Schertz}) are
singular values of a Siegel function, and Jung-Koo-Shin
(\cite[Theorem 2.4]{J-K-S}) showed that his conjectural generators
are the right ones at least over $H$ for
$K\neq\mathbb{Q}(\sqrt{-1}),\mathbb{Q}(\sqrt{-3})$.
\par
Since the primitive element theorem guarantees the existence of a
simple generator of $K_{(N)}$ over $K$, one might try to combine
Hasse's two generators to get a ray class invariant. Cho-Koo
(\cite[Corollary 5.5]{C-K}) recently succeeded in obtaining such a
generator by showing that the singular value of a Weber function is
an algebraic integer and then applying the result of Gross-Zagier
(\cite{G-Z} or \cite[Theorem 13.28]{Cox}). Koo-Shin (\cite[Theorems
9.8 and 9.10]{K-S}) further investigated the problem over $K$ in a
completely different point of view by using both singular values of
the elliptic modular function $j$ and Siegel functions.
\par
For any pair $(r_1,r_2)\in\mathbb{Q}^2\setminus\mathbb{Z}^2$ we
define a \textit{Siegel function} $g_{(r_1,r_2)}(\tau)$ on
$\tau\in\mathfrak{H}$ by the following infinite product expansion
\begin{eqnarray}\label{FourierSiegel}
g_{(r_1,r_2)}(\tau)=-q_\tau^{(1/2)\textbf{B}_2(r_1)}e^{\pi
ir_2(r_1-1)}(1-q_z)\prod_{n=1}^{\infty}(1-q_\tau^nq_z)(1-q_\tau^nq_z^{-1}),
\end{eqnarray}
where $\textbf{B}_2(X)=X^2-X+1/6$ is the second Bernoulli
polynomial, $q_\tau=e^{2\pi i\tau}$ and $q_z=e^{2\pi iz}$ with
$z=r_1\tau+r_2$. Then it is a modular unit in the sense of
\cite[p.36]{K-L}. Since its Fourier coefficients are quite small, we
are able to estimate and compare the values of the function in order
to derive our main theorem.
\par
Let $\mathfrak{a}=[\omega_1,\omega_2]$ be a fractional ideal of $K$
not containing $1$, where $\{\omega_1,\omega_2\}$ is an oriented
basis such that $\omega_1/\omega_2\in\mathfrak{H}$. Writing
$1=r_1\omega_1+r_2\omega_2$ for some
$(r_1,r_2)\in\mathbb{Q}^2\setminus\mathbb{Z}^2$ we denote
\begin{equation*}
g(1,[\omega_1,\omega_2])=g_{(r_1,r_2)}(\omega_1/\omega_2).
\end{equation*}
When a product of these values becomes a unit, we call it an
\textit{elliptic unit}. The $12$-th power the above value depends
only on $\mathfrak{a}$ itself (\cite[Chapter 2 Remark to Theorem
1.2]{K-L}). So we may write $g^{12}(1,\mathfrak{a})$ instead of
$g^{12}(1,[\omega_1,\omega_2])$.
\par
For a nontrivial integral ideal $\mathfrak{f}$ of $K$, let
$I_K(\mathfrak{f})$ be the group of fractional ideals of $K$ which
are relatively prime to $\mathfrak{f}$, and $P_{K,1}(\mathfrak{f})$
be the subgroup of $I_K(\mathfrak{f})$ generated by the principal
ideals $\alpha\mathcal{O}_K$ for $\alpha\in\mathcal{O}_K$ which
satisfies $\alpha\equiv1\pmod{\mathfrak{f}}$. Then the ideal class
group $I_K(\mathfrak{f})/P_{K,1}(\mathfrak{f})$ is isomorphic to the
Galois group of the ray class field $K_\mathfrak{f}$ modulo
$\mathfrak{f}$ over $K$ (\cite[pp.116--118]{Silverman}). Now we
consider the value
\begin{equation*}
g^{12N(\mathfrak{f})}(1,\mathfrak{f}),
\end{equation*}
where $N(\mathfrak{f})$ is the smallest positive integer in
$\mathfrak{f}$. It belongs to $K_\mathfrak{f}$ (\cite[Chapter 2
Proposition 1.3 and Chapter 11 Theorem 1.1]{K-L}). Let
$\sigma:I_K(\mathfrak{f})/P_{K,1}(\mathfrak{f})\rightarrow\mathrm{Gal}(K_\mathfrak{f}/K)$
be the Artin map. Then for a ray class $C\in
I_K(\mathfrak{f})/P_{K,1}(\mathfrak{f})$, $\sigma(C)$ satisfies the
rule
\begin{equation}\label{rule}
g^{12N(\mathfrak{f})}(1,\mathfrak{f})^{\sigma(C)}
=g^{12N(\mathfrak{f})}(1,\mathfrak{f}\mathfrak{c}^{-1}),
\end{equation}
where $\mathfrak{c}$ is a representative integral ideal of $C$ by
the theory of complex multiplication (\cite[pp.235--236]{K-L}). In
our case we take $\mathfrak{f}=N\mathcal{O}_K$ for an integer $N$
($\geq2$). In this paper, as Schertz conjectured, we shall show that
the singular value
\begin{equation*}
g^{12N}(1,N\mathcal{O}_K)=g_{(0,1/N)}^{12N}(\theta)
\end{equation*}
alone, or any one of its integral powers generates $K_{(N)}$ over
$K$ ($\neq\mathbb{Q}(\sqrt{-1}),\mathbb{Q}(\sqrt{-3})$) (Theorem
\ref{primitive} and Remark \ref{exception}). While the formula
(\ref{rule}) provides all conjugates of $g_{(0,1/N)}^{12N}(\theta)$,
it is inconvenient for practical use because we can hardly describe
bases of representative ideals in general. Therefore, rather than
working with actions of $\mathrm{Gal}(K_{(N)}/K)$ directly by
(\ref{rule}) we will manipulate actions of $\mathrm{Gal}(H/K)$ and
$\mathrm{Gal}(K_{(N)}/H)$ separately by following Gee-Stevenhagen's
idea (\cite[$\S$3, 9, 10]{Gee} or \cite[$\S$3, 6]{Stevenhagen}).

\section{Fields of modular functions}

This section will be devoted to reviewing briefly modular function
fields and actions of Galois groups in terms of Siegel functions.
For the full description of the modularity of Siegel functions we
refer to \cite{K-S} or \cite{K-L}.
\par
For a positive integer $N$, let $\zeta_N=e^{2\pi i/N}$ and
\begin{eqnarray*}
\Gamma(N)=\bigg\{\begin{pmatrix}a&b\\c&d\end{pmatrix}\in
\mathrm{SL}_2(\mathbb{Z})~;~
\begin{pmatrix}a&b\\c&d\end{pmatrix}\equiv
\begin{pmatrix}1&0\\0&1\end{pmatrix}\pmod{N}\bigg\}
\end{eqnarray*}
be the principal congruence subgroup of level $N$ of
$\mathrm{SL}_2(\mathbb{Z})$. The group $\Gamma(N)$ acts on
$\mathfrak{H}$ by fractional linear transformations, and the orbit
space $Y(N)=\Gamma(N)\backslash\mathfrak{H}$ can be given a
structure of a Riemann surface. Furthermore, $Y(N)$ can be
compactified by adding cusps so that
$X(N)=\Gamma(N)\backslash\mathfrak{H}^*$ with
$\mathfrak{H}^*=\mathfrak{H}\cup\mathbb{P}^1(\mathbb{Q})$ becomes a
compact Riemann surface (or an algebraic curve), which we call the
\textit{modular curve of level $N$} (\cite[Chapter 2]{D-S} or
\cite[$\S$1.5]{Shimura}).
\par
Meromorphic functions on $X(N)$ are called \textit{modular functions
of level $N$}. In particular, we are interested in the field of
modular functions of level $N$ defined over the $N$-th cyclotomic
field $\mathbb{Q}(\zeta_N)$ which is denoted by $\mathcal{F}_N$.
Then it is well-known that the extension
$\mathcal{F}_N/\mathcal{F}_1$ is Galois and
\begin{equation*}
\mathrm{Gal}(\mathcal{F}_N/\mathcal{F}_1)\cong\mathrm{GL}_2(\mathbb{Z}/N\mathbb{Z})/\{\pm1_2\},
\end{equation*}
whose action is given as follows: We can decompose an element
$\alpha\in\mathrm{GL}_2(\mathbb{Z}/N\mathbb{Z})/\{\pm1_2\}$ into
$\alpha=\alpha_1\cdot\alpha_2$ for some
$\alpha_1\in\mathrm{SL}_2(\mathbb{Z}/N\mathbb{Z})/\{\pm1_2\}$ and
$\alpha_2=\left(\begin{smallmatrix}1&0\\0&d\end{smallmatrix}\right)$.
The action of $\alpha_1$ is defined by a fractional linear
transformation. And, $\alpha_2$ acts by the rule
\begin{equation*}
\sum_{n>-\infty} c_nq_\tau^{n/N}\mapsto \sum_{n>-\infty}
c_n^{\sigma_d}q_\tau^{n/N},
\end{equation*}
where $\sum_{n>-\infty} c_nq_\tau^{n/N}$ is the Fourier expansion of
a function in $\mathcal{F}_N$ and $\sigma_d$ is the automorphism of
$\mathbb{Q}(\zeta_N)$ defined by $\zeta_N^{\sigma_d}=\zeta_N^d$
(\cite[Chapter 6 Theorem 3]{Lang}).
\par
It is well-known that the fields $\mathcal{F}_N$ are described by
$j(\tau)$ and the Fricke functions (\cite[Chapter 6 Corollary
1]{Lang} or \cite[Proposition 6.9]{Shimura}). However, we restate
these fields in terms of Siegel functions for later use. First, we
need some transformation formulas and modularity criterion for
Siegel functions. For $x\in\mathbb{R}$ we define $\langle x\rangle$
by the fractional part of $x$ such that $0\leq \langle x\rangle<1$.

\begin{proposition}\label{F_N}
Let $N\geq 2$. For
$(r_1,r_2)\in(1/N)\mathbb{Z}^2\setminus\mathbb{Z}^2$ the function
$g_{(r_1,r_2)}^{12N}(\tau)$ satisfies the relation
\begin{equation*}
g_{(r_1,r_2)}^{12N}(\tau)=g_{(-r_1,-r_2)}^{12N}(\tau)=g_{(\langle
r_1\rangle,\langle r_2\rangle)}^{12N}(\tau).
\end{equation*}
It belongs to $\mathcal{F}_N$ and
$\alpha\in\mathrm{GL}_2(\mathbb{Z}/N\mathbb{Z})/\{\pm1_2\}$ acts on
it by
\begin{equation*}
g_{(r_1,r_2)}^{12N}(\tau)^\alpha= g_{(r_1,r_2)\alpha}^{12N}(\tau).
\end{equation*}
Also we have
\begin{equation*}
\mathcal{F}_N=\mathbb{Q}(\zeta_N)(j(\tau),~g_{(1/N,0)}^{12N}(\tau),~
g_{(0,1/N)}^{12N}(\tau)).
\end{equation*}
\end{proposition}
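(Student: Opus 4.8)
The plan is to establish the three assertions of Proposition \ref{F_N} in sequence: the two functional relations for $g_{(r_1,r_2)}^{12N}$, the membership in $\mathcal{F}_N$ together with the Galois action formula, and finally the field-generation statement. The first relation should follow directly from the product expansion (\ref{FourierSiegel}). Replacing $(r_1,r_2)$ by $(-r_1,-r_2)$ swaps $z\mapsto -z$, hence $q_z\mapsto q_z^{-1}$, and one checks that the infinite product is visibly symmetric under $q_z\leftrightarrow q_z^{-1}$ while the prefactors $q_\tau^{(1/2)\mathbf{B}_2(r_1)}$ and $e^{\pi i r_2(r_1-1)}$ transform in a controlled way; after raising to the $12N$-th power the ambiguous root-of-unity factors become trivial. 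For the second equality I would first record the standard quasi-periodicity of $g_{(r_1,r_2)}$ under translation of $(r_1,r_2)$ by an integer vector: shifting $r_1$ or $r_2$ by an integer multiplies $g$ only by a root of unity and a power of $q_\tau$ that is absorbed upon taking the $12N$-th power, so $g_{(r_1,r_2)}^{12N}$ depends only on $(\langle r_1\rangle,\langle r_2\rangle)$. These transformation formulas are classical and can be cited from \cite{K-L} or \cite{K-S}.

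**Next I would** address modularity and the Galois action. The key input is the known transformation behavior of Siegel functions under $\mathrm{SL}_2(\mathbb{Z})$: for $\gamma\in\mathrm{SL}_2(\mathbb{Z})$ one has $g_{(r_1,r_2)}(\gamma\tau)$ equal, up to a root of unity, to $g_{(r_1,r_2)\gamma}(\tau)$, where $(r_1,r_2)\gamma$ denotes the row-vector action. Raising to the $12N$-th power kills the root of unity, so $g_{(r_1,r_2)}^{12N}$ is invariant under $\Gamma(N)$ (since then $(r_1,r_2)\gamma\equiv(r_1,r_2)\pmod{\mathbb{Z}^2}$ and the first part applies), giving a modular function of level $N$. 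To see it lies in $\mathcal{F}_N$ rather than merely in the function field over $\mathbb{C}$, I would inspect its $q$-expansion: the product (\ref{FourierSiegel}) has coefficients in $\mathbb{Q}(\zeta_N)$ when $(r_1,r_2)\in(1/N)\mathbb{Z}^2$, because $q_z=\zeta_N^{(\text{integer})}q_\tau^{r_1}$ contributes only $N$-th roots of unity. The action formula $g_{(r_1,r_2)}^{12N,\,\alpha}=g_{(r_1,r_2)\alpha}^{12N}$ is then checked separately on the two factors of the decomposition $\alpha=\alpha_1\alpha_2$: on $\alpha_1\in\mathrm{SL}_2$ it is the fractional-linear-transformation formula just described, and on $\alpha_2=\left(\begin{smallmatrix}1&0\\0&d\end{smallmatrix}\right)$ it is the Galois action $\zeta_N\mapsto\zeta_N^d$ on the Fourier coefficients, which one matches against the effect of $(r_1,r_2)\mapsto(r_1,dr_2)$ on the expansion.

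**For the generation statement** $\mathcal{F}_N=\mathbb{Q}(\zeta_N)\bigl(j,\,g_{(1/N,0)}^{12N},\,g_{(0,1/N)}^{12N}\bigr)$, I would argue by Galois theory using the isomorphism $\mathrm{Gal}(\mathcal{F}_N/\mathcal{F}_1)\cong\mathrm{GL}_2(\mathbb{Z}/N\mathbb{Z})/\{\pm1_2\}$. Let $L$ be the field on the right-hand side; since each listed function lies in $\mathcal{F}_N$ by the previous part and $j$ generates $\mathcal{F}_1$, we have $\mathcal{F}_1\subseteq L\subseteq\mathcal{F}_N$, so it suffices to show that the subgroup $H=\mathrm{Gal}(\mathcal{F}_N/L)$ fixing $L$ is trivial. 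An element $\alpha\in\mathrm{GL}_2(\mathbb{Z}/N\mathbb{Z})/\{\pm1_2\}$ fixes both generators precisely when $g_{(1/N,0)\alpha}^{12N}=g_{(1/N,0)}^{12N}$ and $g_{(0,1/N)\alpha}^{12N}=g_{(0,1/N)}^{12N}$; by the first part of the proposition these equalities force $(1/N,0)\alpha\equiv\pm(1/N,0)$ and $(0,1/N)\alpha\equiv\pm(0,1/N)\pmod{\mathbb{Z}^2}$, i.e.\ the two rows of $\alpha$ are congruent to $\pm$ the standard basis vectors modulo $N$. A short matrix computation then shows $\alpha\equiv\pm1_2$, so $H$ is trivial and $L=\mathcal{F}_N$.

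**The main obstacle** I anticipate is the sign/root-of-unity bookkeeping in the previous paragraph: from $g_{(r_1,r_2)}^{12N}=g_{(-r_1,-r_2)}^{12N}$ the equations only pin down the rows of $\alpha$ up to an independent sign each, so a priori $\alpha$ could be congruent to $\left(\begin{smallmatrix}1&0\\0&-1\end{smallmatrix}\right)$ or $\left(\begin{smallmatrix}-1&0\\0&1\end{smallmatrix}\right)$, which are not $\pm1_2$. To rule these out I would need a third relation, and this is exactly where $j$ plays its role: any $\alpha$ fixing $j$ lies in $\mathrm{SL}_2(\mathbb{Z}/N\mathbb{Z})/\{\pm1_2\}$ (since $j$ generates $\mathcal{F}_1=\mathcal{F}_N^{\mathrm{SL}_2/\{\pm1_2\}}$), forcing $\det\alpha\equiv1$; this determinant constraint eliminates the mixed-sign diagonal matrices and leaves only $\alpha\equiv\pm1_2$. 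Care is also needed to verify that the $q$-expansion coefficients genuinely lie in $\mathbb{Q}(\zeta_N)$ and not a larger cyclotomic field, which I would confirm by writing $z=r_1\tau+r_2$ with $r_1,r_2\in(1/N)\mathbb{Z}$ and tracking that every occurrence of $e^{2\pi i r_2}$ is an $N$-th root of unity.
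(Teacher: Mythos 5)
The paper offers no proof of its own here---it simply cites \cite[Proposition 2.4, Theorems 2.5 and 4.2]{K-S}---so your sketch is measured against the standard argument those results package. Your treatment of the two functional relations, the $\Gamma(N)$-invariance via $g_a\circ\gamma=\varepsilon(\gamma)g_{a\gamma}$, the rationality of the $q^{1/N}$-coefficients over $\mathbb{Q}(\zeta_N)$, and the two-step check of the $\mathrm{GL}_2(\mathbb{Z}/N\mathbb{Z})$-action is exactly that standard route and is sound. The genuine gap is in the generation step. From $g_{(1/N,0)\alpha}^{12N}=g_{(1/N,0)}^{12N}$ you conclude $(1/N,0)\alpha\equiv\pm(1/N,0)\pmod{\mathbb{Z}^2}$ ``by the first part of the proposition.'' But the first part only says that the assignment $\pm a\bmod\mathbb{Z}^2\mapsto g_a^{12N}$ is \emph{well defined}; what you need is its \emph{injectivity} on $\bigl((1/N)\mathbb{Z}^2/\mathbb{Z}^2\setminus\{0\}\bigr)/\{\pm1\}$, i.e., that distinct classes give distinct functions. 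That is true but requires an argument---the order $6N^2\mathbf{B}_2(\langle r_1\rangle)$ of the $q_\tau^{1/N}$-expansion pins down $\langle r_1\rangle$ only up to $\langle r_1\rangle\leftrightarrow1-\langle r_1\rangle$, and one must then compare further coefficients, or invoke the independence results for Siegel units in \cite{K-L}---and nothing in your sketch supplies it. Without it the fixing subgroup $H$ could a priori be strictly larger than $\{\pm1_2\}$.

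A second, smaller error: your resolution of the sign ambiguity is misattributed. You assert that any $\alpha$ fixing $j$ lies in $\mathrm{SL}_2(\mathbb{Z}/N\mathbb{Z})/\{\pm1_2\}$ because $\mathcal{F}_1=\mathcal{F}_N^{\mathrm{SL}_2/\{\pm1_2\}}$. That identity is false: $j$ generates the base field $\mathcal{F}_1$ of the Galois extension, so \emph{every} element of $\mathrm{Gal}(\mathcal{F}_N/\mathcal{F}_1)\cong\mathrm{GL}_2(\mathbb{Z}/N\mathbb{Z})/\{\pm1_2\}$ fixes $j$, and the fixed field of $\mathrm{SL}_2(\mathbb{Z}/N\mathbb{Z})/\{\pm1_2\}$ is $\mathbb{Q}(\zeta_N)(j)$, not $\mathbb{Q}(j)$. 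The determinant constraint you need comes instead from the requirement that $\alpha$ fix $\zeta_N$: writing $\alpha=\alpha_1\cdot\left(\begin{smallmatrix}1&0\\0&d\end{smallmatrix}\right)$ with $d=\det\alpha$, the action on $\mathbb{Q}(\zeta_N)$ is $\zeta_N\mapsto\zeta_N^{d}$, and since $\zeta_N$ lies in your field $L$ by construction, $\det\alpha\equiv1\pmod{N}$. With that correction, and the injectivity supplied, your endgame---$\alpha\equiv\mathrm{diag}(\epsilon_1,\epsilon_2)$ with $\epsilon_1\epsilon_2\equiv1$, hence $\alpha\equiv\pm1_2$---does go through.
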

\begin{proof}
See \cite[Proposition 2.4, Theorems 2.5 and 4.2]{K-S}.
\end{proof}

We set
\begin{equation*}
\mathcal{F}=\bigcup_{N=1}^\infty \mathcal{F}_N.
\end{equation*}
Passing to the projective limit of exact sequences
\begin{equation*}
1\longrightarrow
\{\pm1_2\}\longrightarrow\mathrm{GL}_2(\mathbb{Z}/N\mathbb{Z})\longrightarrow\mathrm{Gal}(\mathcal{F}_N/\mathcal{F}_1)\longrightarrow1
\end{equation*}
for all $N$ ($\geq1$), we obtain an exact sequence
\begin{equation}\label{functionexact}
1\longrightarrow
\{\pm1_2\}\longrightarrow\prod_{p~:~\textrm{primes}}\mathrm{GL}_2(\mathbb{Z}_p)\longrightarrow\mathrm{Gal}(\mathcal{F}/\mathcal{F}_1)\longrightarrow1.
\end{equation}
For every $u=(u_p)_p\in\prod_p\mathrm{GL}_2(\mathbb{Z}_p)$ and a
positive integer $N$, there exists an integral matrix $\alpha$ in
$\mathrm{GL}_2^+(\mathbb{Q})$ with $\det(\alpha)>0$ such that
$\alpha\equiv u_p\pmod{N\mathbb{Z}_p}$ for all $p$ dividing $N$ by
the Chinese remainder theorem. The action of $u$ on $\mathcal{F}_N$
can be described by the action of $\alpha$ (\cite[Proposition
6.21]{Shimura}).

\section{Shimura's reciprocity law}

We shall develop an algorithm for finding all conjugates of the
singular value of a modular function, from which we can determine
all conjugates of $g_{(0,1/N)}^{12N}(\theta)$. To this end we adopt
Gee-Stevenhagen's idea which explains Shimura's reciprocity law
explicitly for practical use.
\par
Let $\mathbb{A}_\mathbb{Q}^\mathrm{f}=\prod_p^\prime\mathbb{Q}_p$
denote the ring of finite adeles. Here, the restricted product is
taken with respect to the subrings
$\mathbb{Z}_p\subset\mathbb{Q}_p$. Every
$x\in\mathrm{GL}_2(\mathbb{A}_\mathbb{Q}^\mathrm{f})$ can be written
as
\begin{equation*}
x=u\cdot\alpha\quad\textrm{with}~u\in\prod_p\mathrm{GL}_2(\mathbb{Z}_p)~\textrm{and}~
\alpha\in\mathrm{GL}_2^+(\mathbb{Q}),
\end{equation*}
since the class number of $\mathbb{Q}$ is one (\cite[Lemma
6.19]{Shimura}). Such a decomposition $x=u\cdot\alpha$ determines a
group action of $\mathrm{GL}_2(\mathbb{A}_\mathbb{Q}^\mathrm{f})$ on
$\mathcal{F}$ by
\begin{equation*}
h^x=h^u\circ\alpha,
\end{equation*}
where $h^u$ is given by the exact sequence (\ref{functionexact})
(\cite[pp.149--150]{Shimura}). Then we have the following
\textit{Shimura's exact sequence}
\begin{equation*}
1\longrightarrow\mathbb{Q}^*\longrightarrow\mathrm{GL}_2(\mathbb{A}_\mathbb{Q}^\mathrm{f})
\longrightarrow\mathrm{Aut}(\mathcal{F})\longrightarrow1
\end{equation*}
(\cite[Theorem 6.23]{Shimura}).
\par
Let $K$ be an imaginary quadratic field of discriminant $d_K$. From
now on we fix
\begin{eqnarray}\label{theta}
\theta=\left\{\begin{array}{ll}\sqrt{d_K}/2&\textrm{for}~d_K\equiv0\pmod{4}\vspace{0.1cm}\\
(-1+\sqrt{d_K})/2&\textrm{for}~d_K\equiv1\pmod{4},\end{array}\right.
\end{eqnarray}
which satisfies $\mathcal{O}_K=\mathbb{Z}[\theta]$. Then we have
\begin{equation*}
\min(\theta,\mathbb{Q})=X^2+B_\theta X+C_\theta
=\left\{\begin{array}{ll} X^2-d_K/4 &
\textrm{if}~d_K\equiv0\pmod{4}\vspace{0.1cm}\\
X^2+X+(1-d_K)/4 & \textrm{if}~d_K\equiv1\pmod{4}.
\end{array}\right.
\end{equation*}
We use the notation $K_p=K\otimes_\mathbb{Q}\mathbb{Q}_p$ for each
prime $p$ and denote the group of finite ideles of $K$ by
$(\mathbb{A}_K^\mathrm{f})^*=\prod_p^\prime K_p^*$, where the
restricted product is taken with respect to the subgroups
$\mathcal{O}_p^*=(\mathcal{O}_K\otimes_\mathbb{Z}\mathbb{Z}_p)^*$ of
$K_p^*$. Let $[\cdot,K]$ denote the Artin map on
$(\mathbb{A}_K^\mathrm{f})^*$. Then the class field theory on $K$ is
summarized in the following exact sequence
\begin{equation*}
1\longrightarrow
K^*\longrightarrow(\mathbb{A}_K^\mathrm{f})^*\stackrel{[\cdot,K]}
{\longrightarrow}\mathrm{Gal}(K^\mathrm{ab}/K)\longrightarrow1,
\end{equation*}
where $K^\mathrm{ab}$ is the maximal abelian extension of $K$
(\cite[$\S$5.2]{Shimura} or \cite[Chapter II Theorem
3.5]{Silverman}). The main theorem of the theory of complex
multiplication states that the value $j(\theta)$ generates $H$ over
$K$, and the sequence
\begin{equation}\label{classfieldexact}
1\longrightarrow
\mathcal{O}_K^*\longrightarrow\prod_p\mathcal{O}_p^*\stackrel
{[\cdot,K]}{\longrightarrow}\mathrm{Gal}(K^\mathrm{ab}/K(j(\theta)))\longrightarrow1
\end{equation}
is exact (\cite[Theorem 5.7]{Shimura}). Furthermore, $K_{(N)}$ is
none other than the field $K(\mathcal{F}_N(\theta))$ which is the
extension field of $K$ obtained by adjoining all singular values
$h(\theta)$ for $h\in\mathcal{F}_N$ which is defined and finite at
$\theta$ (\cite[Chapter 10 Corollary to Theorem 2]{Lang} or
\cite[Proposition 6.33]{Shimura}).
\par
For each prime $p$ we define
\begin{equation*}
(g_\theta)_p:K_p^*\longrightarrow \mathrm{GL}_2(\mathbb{Q}_p)
\end{equation*}
as the injection that sends $x_p\in K_p^*$ to the matrix in
$\mathrm{GL}_2(\mathbb{Q}_p)$ which represents the multiplication by
$x_p$ with respect to the $\mathbb{Q}_p$-basis
$\left(\begin{smallmatrix}\theta\\1\end{smallmatrix}\right)$ for
$K_p$. More precisely, if
$\mathrm{min}(\theta,\mathbb{Q})=X^2+B_\theta X+C_\theta$, then for
$s_p,~t_p\in\mathbb{Q}_p$ we can describe the map as
\begin{equation*}
(g_\theta)_p~:~s_p\theta+t_p\mapsto\begin{pmatrix}t_p-B_\theta
s_p&-C_\theta s_p\\s_p&t_p\end{pmatrix}.
\end{equation*}
On $(\mathbb{A}_K^\mathrm{f})^*$ we have an injection
\begin{equation*}
g_\theta=\prod_p(g_\theta)_p~:~(\mathbb{A}_K^\mathrm{f})^*\longrightarrow{\prod_p}^\prime\mathrm{GL}_2(\mathbb{Q}_p),
\end{equation*}
where the restricted product is taken with respect to the subgroups
$\mathrm{GL}_2(\mathbb{Z}_p)$ of $\mathrm{GL}_2(\mathbb{Q}_p)$.
Combining (\ref{functionexact}) and (\ref{classfieldexact}) we get
the diagram
\begin{eqnarray}\label{exactdiagram}
\begin{array}{ccccccccc}
1& \longrightarrow & \mathcal{O}^* &\longrightarrow
&\prod_p\mathcal{O}_p^*& \stackrel{[\cdot,K]}{\longrightarrow} &
\mathrm{Gal}(K^\mathrm{ab}/K(j(\theta)))
&\longrightarrow &1\\
&&&&\phantom{\bigg\downarrow}\big\downarrow g_\theta&&&&\\
1 & \longrightarrow &\{\pm1_2\} &\longrightarrow
&\prod_p\mathrm{GL}_2(\mathbb{Z}_p)& \longrightarrow &
\mathrm{Gal}(\mathcal{F}/\mathcal{F}_1) &\longrightarrow &1.
\end{array}
\end{eqnarray}
Then \textit{Shimura's reciprocity law} says that for
$h\in\mathcal{F}$ and $x\in\prod_p\mathcal{O}_p^*$
\begin{equation}\label{reciprocity}
h(\theta)^{[x^{-1},K]}=h^{(g_\theta(x))}(\theta)
\end{equation}
(\cite[Theorem 6.31]{Shimura}).
\par
Let $Q=[a,b,c]=aX^2+bXY+cY^2\in\mathbb{Z}[X,Y]$ be a primitive
positive definite quadratic form of discriminant $d_K$. Under an
appropriate equivalence relation these forms determine the group
$\mathrm{C}(d_K)$, called the \textit{form class group of
discriminant $d_K$}. In particular, the unit element is the class
containing
\begin{eqnarray}\label{unitform}
\left\{\begin{array}{ll} \textrm{[}1,0,-d_K/4\textrm{]} &
\textrm{for}~d_K\equiv0\pmod{4}\vspace{0.1cm}\\
\textrm{[}1,1,(1-d_K)/4\textrm{]} & \textrm{for}~
d_K\equiv1\pmod{4},\end{array}\right.
\end{eqnarray}
and the inverse of the class containing $[a,b,c]$ is the class
containing $[a,-b,c]$ (\cite[Theorem 3.9]{Cox}). We identify
$\mathrm{C}(d_K)$ with the set of all \textit{reduced quadratic
forms}, which are characterized by the conditions
\begin{equation}\label{reduced}
(-a<b\leq a<c\quad\textrm{or}\quad 0\leq b\leq
a=c)\quad\textrm{and}\quad b^2-4ac=d_K
\end{equation}
(\cite[Theorem 2.9]{Cox}). Note that the above two conditions for
reduced quadratic forms imply
\begin{equation}\label{bound a}
a\leq\sqrt{-d_K/3}
\end{equation} (\cite[p.29]{Cox}).
It is well-known that $\mathrm{C}(d_K)$ is isomorphic to
$\mathrm{Gal}(H/K)$ (\cite[Theorem 7.7]{Cox}).  Gee and Stevenhagen
found an idele $x_Q\in(\mathbb{A}_K^\mathrm{f})^*$ such that
\begin{equation*}
[x_Q,K]|_H=[a,b,c].
\end{equation*}

\begin{proposition}\label{idele}
Let $Q=[a,b,c]$ be a primitive positive definite quadratic form of
discriminant $d_K$. We put
\begin{equation*}
\theta_Q=(-b+\sqrt{d_K})/2a.
\end{equation*}
Furthermore, for each prime $p$ we define $x_p$ as
\begin{eqnarray*}
x_p=\left\{\begin{array}{ll}a&\textrm{if}~p\nmid
a\vspace{0.1cm}\\
a\theta_Q&\textrm{if}~p\mid a~\textrm{and}~p\nmid c\vspace{0.1cm}\\
a(\theta_Q-1)&\textrm{if}~p\mid a~\textrm{and}~p\mid c.
\end{array}\right.
\end{eqnarray*}
Then for $x_Q=(x_p)_p\in(\mathbb{A}_K^\mathrm{f})^*$ the Galois
action of the Artin symbol $[x_Q,K]$ satisfies the relation
\begin{equation*}
j(\theta)^{[a,b,c]}=j(\theta)^{[x_Q,K]}.
\end{equation*}
\end{proposition}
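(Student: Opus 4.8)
The plan is to read off the Galois action of the idele $x_Q$ on $j(\theta)$ directly from Shimura's reciprocity law (\ref{reciprocity}), applied in its general adelic form valid for every $x\in(\mathbb{A}_K^\mathrm{f})^*$ (\cite[Theorem 6.31]{Shimura}), and then to identify the resulting automorphism with the form class $[a,b,c]$ via the classical description of $\mathrm{Gal}(H/K)$ on the singular value $j(\theta)$. Since $j(\theta)$ generates $H$ over $K$, it suffices to show that $x_Q$ sends $j(\theta)$ to $j$ evaluated at the CM point $\theta_Q=(-b+\sqrt{d_K})/2a$ attached to the ideal $[a,(-b+\sqrt{d_K})/2]$, which under the form--ideal dictionary (\cite[Theorem 7.7]{Cox} and the main theorem of complex multiplication) is exactly the effect of the class of $[a,b,c]$. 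First I would record that $x_Q=(x_p)_p$ is a genuine idele: for all $p\nmid a$ one has $x_p=a\in\mathbb{Z}_p^*\subseteq\mathcal{O}_p^*$, so $x_p$ lies in $\mathcal{O}_p^*$ for all but the finitely many primes dividing $a$.

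The computational heart is a local decomposition $g_\theta(x_Q)=u\cdot\alpha$ with $u=(u_p)_p\in\prod_p\mathrm{GL}_2(\mathbb{Z}_p)$ and a single rational matrix $\alpha\in\mathrm{GL}_2^+(\mathbb{Q})$. I claim that the right choice is
\begin{equation*}
\alpha=\begin{pmatrix}1&(B_\theta-b)/2\\0&a\end{pmatrix},
\end{equation*}
whose upper-right entry is an integer because $b\equiv d_K\equiv B_\theta\pmod 2$, and whose determinant is $a>0$. Using $\sqrt{d_K}=2\theta+B_\theta$ one checks by direct substitution that $\alpha$ acts on $\theta$ by the fractional linear transformation $\alpha\cdot\theta=\theta_Q$. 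Thus, once the decomposition is established, the conclusion is immediate: $j\in\mathcal{F}_1$ is fixed by all of $\mathrm{Gal}(\mathcal{F}/\mathcal{F}_1)$, so $j^u=j$, and by the definition $h^x=h^u\circ\alpha$ we get $j^{g_\theta(x_Q)}(\theta)=(j\circ\alpha)(\theta)=j(\theta_Q)$.

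It remains to verify $u_p:=g_\theta(x_p)\alpha^{-1}\in\mathrm{GL}_2(\mathbb{Z}_p)$ at every prime, and here the three cases in the definition of $x_p$ are precisely what make the argument work. Writing each $x_p$ in the $\mathbb{Z}_p$-basis $\{\theta,1\}$ via the identity $a\theta_Q=\theta+(B_\theta-b)/2$ and applying the explicit formula for $(g_\theta)_p$, one finds that $u_p$ is always an integral matrix, and that the relations $b^2-4ac=d_K=B_\theta^2-4C_\theta$ collapse its off-diagonal entries. The determinants come out to be $a$, $c$, and $a+b+c$ according as $p\nmid a$, as $p\mid a$ with $p\nmid c$, or as $p\mid a$ with $p\mid c$, and in each case this is a $p$-adic unit. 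The only delicate instance is the last: there $\det u_p=a+b+c\equiv b\pmod p$, which is a unit exactly because primitivity $\gcd(a,b,c)=1$ forbids $p\mid b$ once $p\mid a$ and $p\mid c$. This is the step I expect to be the main obstacle, both because it forces the separate handling of the ramified primes dividing $a$ and the structure of the three cases, and because of the accompanying bookkeeping of the inverse: the reciprocity law is stated for $[x^{-1},K]$, so care is needed to track orientations so that the class $[a,b,c]$ itself, rather than its inverse $[a,-b,c]$, is the one read off, keeping the reduced-form normalization (\ref{reduced}) and the choice $\theta_Q\in\mathfrak{H}$ consistent throughout (\cite[$\S$3, 6]{Stevenhagen}, \cite[$\S$3]{Gee}).
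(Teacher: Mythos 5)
Your proposal is correct and follows essentially the same route as the paper's source: the paper's proof is only a pointer to \cite[Lemma 19]{Gee} and \cite[$\S$6]{Stevenhagen}, and your decomposition $g_\theta(x_Q)=u\cdot\alpha$ with $\alpha=\left(\begin{smallmatrix}1&(B_\theta-b)/2\\0&a\end{smallmatrix}\right)$ is exactly the computation carried out there --- indeed the local matrices $u_p$ you obtain (with determinants $a$, $c$, $a+b+c$, the last a unit by primitivity) are precisely the matrices $u_Q$ recorded in Proposition \ref{Hilbertclass}. The one point you rightly flag, matching $j(\theta)^{[x_Q,K]}=j(\mathfrak{a}_Q^{-1})$ for $\mathfrak{a}_Q=[a,(-b+\sqrt{d_K})/2]$ with the class of $[a,b,c]$ under the form--ideal dictionary, is handled in the cited references by the same normalization the paper uses in (\ref{unitform}) and (\ref{reduced}), so no gap remains.
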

\begin{proof}
See \cite[Lemma 19]{Gee}  or \cite[$\S$6]{Stevenhagen}.
\end{proof}

The next proposition gives the action of $[x_Q^{-1},K]$ on
$K^\mathrm{ab}$ by using Shimura's reciprocity law
(\ref{reciprocity}).

\begin{proposition}\label{Hilbertclass}
Let $Q=[a,b,c]$ be a primitive positive definite quadratic form of
discriminant $d_K$ and $\theta_Q$  be as in Proposition
\textup{\ref{idele}}. Define
$u_Q=(u_p)_p\in\prod_p\mathrm{GL}_2(\mathbb{Z}_p)$ as
\begin{itemize}
\item[] Case 1 : $d_K\equiv0\pmod{4}$
\begin{eqnarray}\label{u1}
u_p=\left\{\begin{array}{ll}
\begin{pmatrix}a&b/2\\0&1\end{pmatrix}&\textrm{if}~p\nmid a\vspace{0.1cm}\\
\begin{pmatrix}-b/2&-c\\1&0\end{pmatrix}&\textrm{if}~p\mid a~\textrm{and}~p\nmid c\vspace{0.1cm}\\
\begin{pmatrix}-a-b/2&-c-b/2\\1&-1\end{pmatrix}&\textrm{if}~p\mid a~\textrm{and}~p\mid c
\end{array}\right.
\end{eqnarray}
\item[] Case 2 : $d_K\equiv1\pmod{4}$
\begin{eqnarray}\label{u2}
u_p=\left\{\begin{array}{ll}
\begin{pmatrix}a&(b-1)/2\\0&1\end{pmatrix}&\textrm{if}~p\nmid a\vspace{0.1cm}\\
\begin{pmatrix}-(b+1)/2&-c\\1&0\end{pmatrix}&\textrm{if}~p\mid a~\textrm{and}~p\nmid c\vspace{0.1cm}\\
\begin{pmatrix}-a-(b+1)/2&-c+(1-b)/2\\1&-1\end{pmatrix}&\textrm{if}~p\mid a~\textrm{and}~p\mid
c.
\end{array}\right.
\end{eqnarray}
\end{itemize}
Then for $h\in\mathcal{F}$ which is defined and finite at $\theta$
we have
\begin{equation*}
h(\theta)^{[x_Q^{-1},K]}=h^{u_Q}(\theta_Q).
\end{equation*}
\end{proposition}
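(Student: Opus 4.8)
The plan is to feed the idele $x_Q$ of Proposition \ref{idele} into Shimura's reciprocity law and then to unwind the resulting $\mathrm{GL}_2(\mathbb{A}_\mathbb{Q}^\mathrm{f})$-action into a rational part acting on $\theta$ and an integral part acting on $\mathcal{F}$. In the general form of Shimura's reciprocity law --- of which (\ref{reciprocity}) is the restriction to $x\in\prod_p\mathcal{O}_p^*$, and which is needed here since $x_Q\notin\prod_p\mathcal{O}_p^*$ once $a>1$ --- one has
\[
h(\theta)^{[x_Q^{-1},K]}=h^{g_\theta(x_Q)}(\theta),
\]
where $g_\theta(x_Q)=\prod_p(g_\theta)_p(x_p)$ lies in the restricted product ${\prod_p}'\mathrm{GL}_2(\mathbb{Q}_p)\subset\mathrm{GL}_2(\mathbb{A}_\mathbb{Q}^\mathrm{f})$. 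Recalling that such an element acts on $\mathcal{F}$ by writing $g_\theta(x_Q)=u_Q\cdot\alpha$ with $u_Q\in\prod_p\mathrm{GL}_2(\mathbb{Z}_p)$ and $\alpha\in\mathrm{GL}_2^+(\mathbb{Q})$, and that then $h^{g_\theta(x_Q)}=h^{u_Q}\circ\alpha$, the entire statement reduces to producing such a factorization in which $\alpha$ acts on $\theta$ by the fractional linear transformation $\theta\mapsto\theta_Q$ and in which $u_Q=(u_p)_p$ is the tuple written down in (\ref{u1})--(\ref{u2}). Once this is in hand, $h^{g_\theta(x_Q)}(\theta)=(h^{u_Q}\circ\alpha)(\theta)=h^{u_Q}(\theta_Q)$, which is the assertion.

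The factorization I would take is global in $\alpha$: set
\[
\alpha=\begin{pmatrix}1&-(b-B_\theta)/2\\0&a\end{pmatrix}\in\mathrm{GL}_2^+(\mathbb{Q}),
\]
whose entries are integral because $b\equiv B_\theta\pmod 2$ (the relation $d_K=b^2-4ac$ forces $b$ even when $d_K\equiv0$ and $b$ odd when $d_K\equiv1$), and which satisfies $\alpha(\theta)=(\theta-(b-B_\theta)/2)/a=\theta_Q$ after substituting $\sqrt{d_K}=2\theta+B_\theta$. I would then simply \emph{define} $u_p=(g_\theta)_p(x_p)\,\alpha^{-1}$ for every prime $p$, so that $g_\theta(x_Q)=u_Q\cdot\alpha$ holds by construction, and check that $u_p$ coincides with the matrices in (\ref{u1})--(\ref{u2}). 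This is a direct computation with the three values $x_p\in\{a,\,a\theta_Q,\,a(\theta_Q-1)\}$ of Proposition \ref{idele}: one writes each $x_p$ as $s_p\theta+t_p$ (for instance $a=0\cdot\theta+a$ and $a\theta_Q=\theta-(b-B_\theta)/2$), applies the matrix formula for $(g_\theta)_p$, and multiplies on the right by $\alpha^{-1}=\tfrac1a\left(\begin{smallmatrix}a&(b-B_\theta)/2\\0&1\end{smallmatrix}\right)$; the factors of $a$ cancel and one recovers exactly the stated $u_p$ in each of the three cases. Note that the answer depends only on which case $p$ falls into, so $u_Q$ is genuinely the element of $\prod_p\mathrm{GL}_2(\mathbb{Z}_p)$ described.

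The one point that genuinely uses the hypotheses, and which I regard as the crux, is verifying that each $u_p$ really lies in $\mathrm{GL}_2(\mathbb{Z}_p)$, i.e.\ is $p$-integral with $p$-adic unit determinant; this is precisely what legitimizes treating $(u_p)_p$ as the integral part of the decomposition. Integrality of the entries is the parity remark above. For the determinant, writing $\det u_p=N_{K_p/\mathbb{Q}_p}(x_p)/\det\alpha$ and using that $\theta_Q$ is a root of $aX^2+bX+c$, one finds $\det u_p=a$ when $p\nmid a$, $\det u_p=c$ when $p\mid a$ and $p\nmid c$, and $\det u_p=a+b+c$ when $p\mid a$ and $p\mid c$; in each case this is prime to $p$ --- the last case precisely because primitivity of $Q=[a,b,c]$ (that is, $\gcd(a,b,c)=1$) excludes $p\mid b$ when $p$ already divides both $a$ and $c$. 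With $u_Q\in\prod_p\mathrm{GL}_2(\mathbb{Z}_p)$ and $\alpha\in\mathrm{GL}_2^+(\mathbb{Q})$ thereby confirmed, the chain $h(\theta)^{[x_Q^{-1},K]}=h^{g_\theta(x_Q)}(\theta)=h^{u_Q}(\theta_Q)$ completes the argument.
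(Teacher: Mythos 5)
Your argument is correct: the global decomposition $g_\theta(x_Q)=u_Q\cdot\alpha$ with $\alpha=\left(\begin{smallmatrix}1&-(b-B_\theta)/2\\0&a\end{smallmatrix}\right)$ (so $\alpha(\theta)=\theta_Q$) checks out in all three cases --- the matrix products $(g_\theta)_p(x_p)\alpha^{-1}$ do reproduce (\ref{u1})--(\ref{u2}) using $B_\theta^2-4C_\theta=d_K=b^2-4ac$, the determinants $a$, $c$, $a+b+c$ are $p$-units exactly as you say by primitivity of $Q$, and invoking the full idelic form of Shimura's reciprocity law is indeed necessary since $x_Q\notin\prod_p\mathcal{O}_p^*$ once $a>1$. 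The paper gives no proof of its own, deferring to \cite[Lemma 20]{Gee} and \cite[$\S$6]{Stevenhagen}, and your computation is essentially the argument carried out in those references, so it matches the intended proof.
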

\begin{proof}
See \cite[Lemma 20]{Gee} or \cite[$\S$6]{Stevenhagen}.
\end{proof}

For each positive integer $N$ we define the matrix group
\begin{equation*}
W_{N,\theta}=\bigg\{\begin{pmatrix}t-B_\theta s & -C_\theta
s\\s&t\end{pmatrix}\in\mathrm{GL}_2(\mathbb{Z}/N\mathbb{Z})~;~t,~s\in\mathbb{Z}/N\mathbb{Z}\bigg\}.
\end{equation*}
Analyzing the diagram (\ref{exactdiagram}) and using Shimura's
reciprocity law (\ref{reciprocity}), Gee and Stevenhagen could
express $\mathrm{Gal}(K_{(N)}/H)$ quite explicitly.

\begin{proposition}\label{rayclass}
Assume that $K\neq\mathbb{Q}(\sqrt{-1}),\mathbb{Q}(\sqrt{-3})$ and
$N\geq1$. Then we have a surjection
\begin{eqnarray*}
W_{N,\theta}&\longrightarrow&\mathrm{Gal}(K_{(N)}/H)\\
\label{action}\alpha&\longmapsto&(h(\theta)\mapsto
h^\alpha(\theta))~\textrm{for $h\in \mathcal{F}_N$ which is defined
and finite at}~\theta,
\end{eqnarray*}
whose kernel is $\{\pm 1_2\}$ by \textup{(\ref{exactdiagram})} and
\textup{(\ref{reciprocity})}.
\end{proposition}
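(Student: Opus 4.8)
The plan is to exhibit the stated map as the level-$N$ reduction of the left-hand vertical arrow $g_\theta$ in the diagram (\ref{exactdiagram}), composed with the Artin map of (\ref{classfieldexact}), and then to extract its kernel. First I would identify $W_{N,\theta}$ with the image of $\prod_p\mathcal{O}_p^*$ under $x\mapsto g_\theta(x)\bmod N$. For each $p$ the matrix $(g_\theta)_p(s_p\theta+t_p)$ is precisely the regular representation of $s_p\theta+t_p\in\mathcal{O}_p$ on the basis $\left(\begin{smallmatrix}\theta\\1\end{smallmatrix}\right)$, and its determinant equals the norm $N_{K/\mathbb{Q}}(t_p+s_p\theta)=t_p^2-B_\theta s_pt_p+C_\theta s_p^2$; reducing modulo $N$ and applying the Chinese remainder theorem identifies the reduction of $g_\theta(\prod_p\mathcal{O}_p^*)$ with the image of $(\mathcal{O}_K/N\mathcal{O}_K)^*$ under the regular representation, which is exactly $W_{N,\theta}$. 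In particular $W_{N,\theta}$ is abelian and the composite $r_N\circ g_\theta:\prod_p\mathcal{O}_p^*\to W_{N,\theta}$, with $r_N$ reduction modulo $N$, is surjective.

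Next I would show the assignment $\alpha\mapsto(h(\theta)\mapsto h^\alpha(\theta))$ is a well-defined surjective homomorphism. Given $\alpha\in W_{N,\theta}$, pick $x\in\prod_p\mathcal{O}_p^*$ with $g_\theta(x)\equiv\alpha\pmod N$. Using $H=K(j(\theta))$ and $K_{(N)}=K(\mathcal{F}_N(\theta))$, Shimura's reciprocity (\ref{reciprocity}) yields $h(\theta)^{[x^{-1},K]}=h^{g_\theta(x)}(\theta)=h^\alpha(\theta)$ for every $h\in\mathcal{F}_N$ finite at $\theta$, the last equality because the action of $\prod_p\mathrm{GL}_2(\mathbb{Z}_p)$ on $\mathcal{F}_N$ factors through $\mathrm{GL}_2(\mathbb{Z}/N\mathbb{Z})$ by Proposition \ref{F_N}. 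Hence $[x^{-1},K]|_{K_{(N)}}$ depends only on $\alpha$ and realizes the asserted automorphism; since $W_{N,\theta}$ is abelian this assignment is a homomorphism. Surjectivity is then formal: the composite $\prod_p\mathcal{O}_p^*\to W_{N,\theta}\to\mathrm{Gal}(K_{(N)}/H)$ is $x\mapsto[x^{-1},K]|_{K_{(N)}}$, which is onto by the surjectivity in (\ref{classfieldexact}) together with that of the restriction $\mathrm{Gal}(K^\mathrm{ab}/H)\to\mathrm{Gal}(K_{(N)}/H)$.

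The core of the argument — and the step I expect to be the main obstacle — is the kernel computation, which is exactly where the hypothesis $K\neq\mathbb{Q}(\sqrt{-1}),\mathbb{Q}(\sqrt{-3})$ is used. Since $r_N\circ g_\theta$ is surjective, the kernel of $W_{N,\theta}\to\mathrm{Gal}(K_{(N)}/H)$ equals the image under $r_N\circ g_\theta$ of $\{x\in\prod_p\mathcal{O}_p^*:[x^{-1},K]|_{K_{(N)}}=1\}$. Here I must invoke the idelic description of the ray class field: $K_{(N)}$ corresponds, via the Artin map, to the open subgroup $K^*\cdot U(N)$ of $(\mathbb{A}_K^\mathrm{f})^*$, where $U(N)=\prod_{p\mid N}(1+N\mathcal{O}_p)\times\prod_{p\nmid N}\mathcal{O}_p^*$. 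Intersecting with $\prod_p\mathcal{O}_p^*$ and using $K^*\cap\prod_p\mathcal{O}_p^*=\mathcal{O}_K^*$ gives $(K^*U(N))\cap\prod_p\mathcal{O}_p^*=\mathcal{O}_K^*\cdot U(N)$, and applying $r_N\circ g_\theta$ annihilates $U(N)$ (its factors at $p\mid N$ lie in $1+N\mathcal{O}_p$, hence reduce to $1_2$). Thus the kernel is $g_\theta(\mathcal{O}_K^*)\bmod N$, and under the standing hypothesis $\mathcal{O}_K^*=\{\pm1\}$ this is precisely $\{\pm1_2\}$. As a consistency check one sees directly from Proposition \ref{F_N} that $-1_2$ acts trivially on $\mathcal{F}_N$, since $g^{12N}_{(r_1,r_2)}=g^{12N}_{(-r_1,-r_2)}$, $j$ is fixed, and $\det(-1_2)=1$ acts trivially on $\zeta_N$. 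The genuinely delicate ingredient is the idelic identification of $K_{(N)}$ and the intersection $\mathcal{O}_K^*\cdot U(N)$; granting the diagram (\ref{exactdiagram}), everything else is formal.
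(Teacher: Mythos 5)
Your argument is correct, and it is essentially the standard one: the paper itself offers no proof beyond citing Gee and Stevenhagen, and what you have written out — identifying $W_{N,\theta}$ with the mod-$N$ image of $g_\theta(\prod_p\mathcal{O}_p^*)$, transporting the surjection of (\ref{classfieldexact}) through Shimura's reciprocity law (\ref{reciprocity}), and computing the kernel as $g_\theta(\mathcal{O}_K^*)\bmod N=\{\pm1_2\}$ via the idelic congruence subgroup for $K_{(N)}$ — is precisely the argument those references give. No gaps; the hypothesis $K\neq\mathbb{Q}(\sqrt{-1}),\mathbb{Q}(\sqrt{-3})$ enters exactly where you place it, in the identification $\mathcal{O}_K^*=\{\pm1\}$.
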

\begin{proof}
See \cite[pp.50--51]{Gee} or \cite[$\S$3]{Stevenhagen}.
\end{proof}

Finally we obtain an assertion which we shall use to solve our main
problem. In the next theorem, we follow the notations in
Propositions \ref{Hilbertclass} and \ref{rayclass}.

\begin{theorem}\label{conjugate}
Assume that $K\neq\mathbb{Q}(\sqrt{-1}),\mathbb{Q}(\sqrt{-3})$ and
$N\geq1$. Then there is a one-to-one correspondence
\begin{eqnarray*}
W_{N,\theta}/\{\pm1_2\}\times\mathrm{C}(d_K)&\longrightarrow&\mathrm{Gal}(K_{(N)}/K)\\
(\alpha,Q)&\mapsto&(h(\theta)\mapsto h^{\alpha\cdot
u_Q}(\theta_Q))~\textrm{for $h\in\mathcal{F}_N$ which is defined and
finite at $\theta$.}
\end{eqnarray*}
Here, we follow notations in Propositions
\textup{\ref{Hilbertclass}} and \textup{\ref{rayclass}}.
\end{theorem}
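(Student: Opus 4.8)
The plan is to recognize the proposed map as the composite of the two partial descriptions of the Galois action furnished by Propositions~\ref{Hilbertclass} and~\ref{rayclass}, and then to splice them along the tower $K\subseteq H\subseteq K_{(N)}$. For $\alpha\in W_{N,\theta}$ write $\tau_\alpha$ for the map $h(\theta)\mapsto h^\alpha(\theta)$; by Proposition~\ref{rayclass} this is an element of $\mathrm{Gal}(K_{(N)}/H)$, it depends only on $\alpha$ modulo $\{\pm1_2\}$, and $\alpha\mapsto\tau_\alpha$ induces an isomorphism $W_{N,\theta}/\{\pm1_2\}\stackrel{\sim}{\longrightarrow}\mathrm{Gal}(K_{(N)}/H)$. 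For a form $Q=[a,b,c]$ set $\rho_Q=[x_Q^{-1},K]|_{K_{(N)}}\in\mathrm{Gal}(K_{(N)}/K)$; by Proposition~\ref{Hilbertclass} it acts by $g(\theta)\mapsto g^{u_Q}(\theta_Q)$ for every $g\in\mathcal{F}_N$ defined and finite at $\theta$. Using the right-action convention $h^{\alpha u_Q}=(h^\alpha)^{u_Q}$ and taking $g=h^\alpha$, I would observe that
\begin{equation*}
\rho_Q\bigl(\tau_\alpha(h(\theta))\bigr)=\rho_Q\bigl(h^\alpha(\theta)\bigr)=(h^\alpha)^{u_Q}(\theta_Q)=h^{\alpha u_Q}(\theta_Q),
\end{equation*}
so that the map in the statement is exactly $(\alpha,Q)\mapsto\rho_Q\circ\tau_\alpha$. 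In particular its values lie automatically in $\mathrm{Gal}(K_{(N)}/K)$, being composites of two such automorphisms, and well-definedness in the first variable is immediate since $\tau_\alpha=\tau_{-\alpha}$.

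The heart of the argument is bijectivity, which I would deduce from the behaviour on $H$. By Proposition~\ref{idele} and the fact that $j(\theta)$ generates $H$ over $K$, the assignment $Q\mapsto[x_Q,K]|_H$ is precisely the isomorphism $\mathrm{C}(d_K)\stackrel{\sim}{\to}\mathrm{Gal}(H/K)$; composing with inversion shows that $Q\mapsto\rho_Q|_H=\bigl([x_Q,K]|_H\bigr)^{-1}$ is again a bijection of $\mathrm{C}(d_K)$ onto $\mathrm{Gal}(H/K)$. For injectivity, if $\rho_Q\circ\tau_\alpha=\rho_{Q'}\circ\tau_{\alpha'}$ then restriction to $H$ annihilates the factors $\tau_\alpha,\tau_{\alpha'}\in\mathrm{Gal}(K_{(N)}/H)$ and leaves $\rho_Q|_H=\rho_{Q'}|_H$, whence $Q=Q'$ by the bijection just noted; cancelling $\rho_Q$ then gives $\tau_\alpha=\tau_{\alpha'}$, i.e.\ $\alpha\equiv\alpha'\pmod{\pm1_2}$ by Proposition~\ref{rayclass}. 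For surjectivity, given $\sigma\in\mathrm{Gal}(K_{(N)}/K)$ I would choose the unique $Q$ with $\rho_Q|_H=\sigma|_H$; then $\rho_Q^{-1}\sigma$ fixes $H$, so it equals some $\tau_\alpha$ by Proposition~\ref{rayclass}, and $\sigma=\rho_Q\circ\tau_\alpha$ lies in the image. Alternatively, injectivity together with the multiplicativity $[K_{(N)}:K]=[K_{(N)}:H]\,[H:K]=|W_{N,\theta}/\{\pm1_2\}|\cdot|\mathrm{C}(d_K)|$ in the tower already forces the correspondence to be a bijection.

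The step I expect to require the most care is the bookkeeping in the first paragraph: fixing the right-action convention consistently so that the composite $\rho_Q\circ\tau_\alpha$ reproduces the matrix product $\alpha u_Q$ in the correct order, and tracking the inverse in $[x_Q^{-1},K]$ so that $Q\mapsto\rho_Q|_H$ emerges as an honest bijection of $\mathrm{C}(d_K)$ onto $\mathrm{Gal}(H/K)$ rather than its twist by inversion. Once these conventions are pinned down, the remainder is the routine splicing of Propositions~\ref{Hilbertclass} and~\ref{rayclass} along the short exact sequence $1\to\mathrm{Gal}(K_{(N)}/H)\to\mathrm{Gal}(K_{(N)}/K)\to\mathrm{Gal}(H/K)\to1$, with the collection $\{\rho_Q\}_{Q\in\mathrm{C}(d_K)}$ playing the role of a set-theoretic section of the quotient map.
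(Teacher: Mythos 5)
Your proposal is correct and follows essentially the same route as the paper: the paper's proof simply exhibits the tower $K\subseteq H\subseteq K_{(N)}$ with $\mathrm{Gal}(H/K)$ described by Proposition~\ref{idele}, $\mathrm{Gal}(K_{(N)}/H)$ by Proposition~\ref{rayclass}, and then invokes Proposition~\ref{Hilbertclass}, which is exactly the splicing you carry out. Your write-up merely makes explicit the details (the composite $\rho_Q\circ\tau_\alpha$, the restriction-to-$H$ argument for bijectivity) that the paper leaves to the reader.
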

\begin{proof}
Observe the following diagram:
\begin{equation*}
\begindc{0}[3]

\obj(0,0)[A]{$K$}

\obj(0,15)[B]{$H$}

\obj(0,30)[C]{$K_{(N)}$}

\obj(0, 35)[D]{\underline{Fields}}

\obj(30, 35)[E]{\underline{Galois groups}}
\mor{A}{B}{$\quad\Bigg)\quad\mathrm{Gal}(H/K)=\{[x_Q,~K]|_H~;~Q\in
\mathrm{C}(d_K)\}\quad\textrm{by Proposition
\ref{idele}}.$}[\atright,\solidline]

\mor{B}{C}{$\quad\Bigg)\quad\mathrm{Gal}(K_{(N)}/H)\cong
W_{N,\theta}/\{\pm1_2\}\quad\textrm{by Proposition
\ref{rayclass}}$}[\atright,\solidline]
\enddc
\end{equation*}
Now the conclusion follows from Proposition \ref{Hilbertclass}.
\end{proof}

\begin{remark}\label{identitycorrespond}
In particular, the unit element of
$W_{N,\theta}/\{\pm1_2\}\times\mathrm{C}(d_K)$ corresponds to the
unit element of $\mathrm{Gal}(K_{(N)}/K)$ by the definitions of
$u_Q$ and $\theta_Q$. Note that the correspondence is not a group
homomorphism.
\end{remark}

\section{Ray class invariants}

In this last section we shall prove that the singular value
$g_{(0,1/N)}^{12N}(\theta)$ generates $K_{(N)}$ by showing that the
only automorphism of $K_{(N)}$ over $K$ which fixes it is the unit
element. Then Galois theory guarantees our theorem.
\par
Throughout this section we let $K$
($\neq\mathbb{Q}(\sqrt{-1}),\mathbb{Q}(\sqrt{-3})$) be an imaginary
quadratic field of discriminant $d_K$ such that $d_K\leq-7$. We put
$D=\sqrt{-d_K/3}$ and define $\theta$, $\theta_Q$, $u_Q$ for each
primitive positive definite quadratic form $Q=[a,b,c]$ as in
(\ref{theta}) and Proposition \ref{Hilbertclass}. If we set
\begin{equation*}
B=|q_\theta|=|e^{2\pi i\theta}|=e^{-\pi\sqrt{-d_K}},
\end{equation*}
then we have
\begin{equation}\label{B}
B\leq e^{-\sqrt{7}\pi}\quad\textrm{and}\quad
B^{1/D}=e^{-\sqrt{3}\pi}.
\end{equation}
\par
In what follows we shall often use the following basic inequality
\begin{equation}\label{basicinequality}
1+X<e^X\quad\textrm{for}~X>0.
\end{equation}

\begin{lemma}\label{ineq}
We have the following inequalities:
\begin{itemize}
\item[(i)] If $N\geq21$, then
$|(1-\zeta_N)/(1-B^{1/DN})|<1.306$.
\item[(ii)] If $N\geq2$, then
$|(1-\zeta_N)/(1-\zeta_N^s)|\leq1$ for all $s\in\mathbb{Z}\setminus
N\mathbb{Z}$.
\item[(iii)] If $N\geq4$,
then $|(1-\zeta_N)/(1-\zeta_N^s)|\leq 1/\sqrt{2}$ for $2\leq s\leq
N/2$.
\item[(iv)] If $N\geq2$, then
$B^{(1/2)(\mathbf{B}_2(0)-\mathbf{B}_2(1/N))}
|(1-\zeta_N)/(1-B^{1/N})|<0.76$.
\item[(v)] $1/(1-B^{X/D})<1+B^{X/1.03D}$
for all $X\geq1/2$.
\item[(vi)] $1/(1-B^X)<1+B^{X/1.03}$ for all
$X\geq1/2$.
\end{itemize}
\end{lemma}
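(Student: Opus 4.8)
```latex
The plan is to verify each of the six inequalities by elementary estimates, treating the cyclotomic quotients and the $B$-power factors separately, and relying on the basic inequality (\ref{basicinequality}) together with the numerical bounds in (\ref{B}).

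First I would dispose of the purely cyclotomic items (ii) and (iii), which involve no $B$ at all. For these the key geometric fact is that $|1-\zeta_N^s|=2\sin(\pi\langle s/N\rangle)$ (more precisely $2|\sin(\pi s/N)|$), so that $|(1-\zeta_N)/(1-\zeta_N^s)| = \sin(\pi/N)/\sin(\pi\|s\|/N)$, where $\|s\|$ denotes the distance from $s/N$ to the nearest integer times $N$. For (ii) I would argue that $\|s\|\geq1$ for $s\notin N\mathbb{Z}$, hence the denominator's sine is at least as large as the numerator's, giving the bound $1$. For (iii), with $2\leq s\leq N/2$ the relevant sine is $\sin(\pi s/N)\geq\sin(2\pi/N)$, and since $\sin(2\pi/N)/\sin(\pi/N)=2\cos(\pi/N)\geq2\cos(\pi/4)=\sqrt{2}$ whenever $N\geq4$, the quotient is at most $1/\sqrt{2}$ as claimed.

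Next I would handle the items mixing a $B$-power with a cyclotomic or real factor, namely (i), (iv), (v), (vi). For (v) and (vi), which are the cleanest, I would set $Y=B^{X/D}$ (resp.\ $Y=B^X$) and rewrite the claim $1/(1-Y)<1+Y^{1/1.03}$; using (\ref{B}) the quantity $Y$ is bounded by $e^{-\sqrt{3}\pi/2}$ (resp.\ $e^{-\sqrt{7}\pi/2}$) since $X\geq1/2$, which is a very small number, and then the inequality reduces to comparing $Y$ against $Y^{1/1.03}(1-Y)$, an estimate that follows because $Y$ is small enough that the exponent gap $1/1.03<1$ dominates the correction factor $(1-Y)$. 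For (i) and (iv) I would substitute the explicit value $B^{1/D}=e^{-\sqrt{3}\pi}$ from (\ref{B}), expand $\mathbf{B}_2(0)-\mathbf{B}_2(1/N)=(1/N)-(1/N)^2$ in (iv) using the given $\mathbf{B}_2(X)=X^2-X+1/6$, bound $|1-\zeta_N|=2\sin(\pi/N)\leq 2\pi/N$ from above, and bound the real denominators $1-B^{1/DN}$ and $1-B^{1/N}$ from below via (\ref{basicinequality}) in the form $1-e^{-t}>t/(1+t)$ or a similar monotone estimate; the resulting rational expressions in $N$ are then checked to stay below the stated constants $1.306$ and $0.76$ on the ranges $N\geq21$ and $N\geq2$ respectively.

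The main obstacle will be item (iv), and to a lesser degree (i): unlike the others these are not uniform in $N$ in an obvious monotone way, because the numerator $|1-\zeta_N|\sim 2\pi/N$ shrinks while the denominator $1-B^{1/N}\sim (\sqrt{-d_K}\,\pi)/N$ also shrinks, so the quotient tends to a finite nonzero limit as $N\to\infty$ and one must control the whole ratio rather than a single small term. I would therefore treat (iv) by showing the function of $N$ is monotone (or by splitting into the small-$N$ range, checked directly, and the large-$N$ range, where the limiting value is estimated); the delicate point is that the $B$-power prefactor $B^{(1/2)((1/N)-(1/N)^2)}$ works in our favor since its exponent is positive and $B<1$, and combining this decay with the sine upper bound is exactly what forces the product below $0.76$. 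Once all six parts are established the lemma follows.
```
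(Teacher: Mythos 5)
Your treatment of (ii), (iii), (v) and (vi) is essentially the paper's own: the sine identity $|1-\zeta_N^s|=2|\sin(\pi s/N)|$ with $\sin(s\pi/N)\geq\sin(2\pi/N)$ and $2\cos(\pi/N)\geq\sqrt{2}$ for (ii)--(iii), and for (v)--(vi) the same algebraic reduction of $1/(1-Y)<1+Y^{1/1.03}$ to $Y^{3/103}+Y<1$ (equivalently $Y<Y^{1/1.03}(1-Y)$), checked at the extremal value $Y=e^{-\sqrt{3}\pi/2}$ resp.\ $Y=e^{-\sqrt{7}\pi/2}$ coming from $X\geq1/2$ and (\ref{B}). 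That part is fine.

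The gap is in your primary route to (i) and (iv). The constants $1.306$ and $0.76$ are extremely tight: the function in (i) equals about $1.3058$ at $N=21$, and the expression in (iv) increases to the limit $2/\sqrt{7}\approx0.7559$ as $N\to\infty$. The crude bounds you propose, $2\sin(\pi/N)\leq2\pi/N$ together with $1-e^{-t}>t/(1+t)$, lose far too much. For (i) they give the upper bound $(2/\sqrt{3})(1+\sqrt{3}\pi/N)$, which at $N=21$ is about $1.454>1.306$ (even the sharper $1-e^{-t}>t-t^2/2$ yields about $1.326$). For (iv) the analogous bound is $(2/\sqrt{7})\,e^{-(\sqrt{7}\pi/2)(1/N-1/N^2)}(1+\sqrt{7}\pi/N)$, whose maximum over $N\geq2$ is roughly $0.9$, well above $0.76$. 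So the "resulting rational expressions" do \emph{not} stay below the stated constants, and this step of your argument fails. What is actually needed --- and what the paper does under the phrase ``routine to check'' --- is to work with the exact functions $2\sin(\pi/N)/(1-e^{-\sqrt{3}\pi/N})$ and $e^{(-\sqrt{7}\pi/2)(1/N-1/N^2)}\cdot2\sin(\pi/N)/(1-e^{-\sqrt{7}\pi/N})$ (the latter obtained by replacing $B$ by $e^{-\sqrt{7}\pi}$, which is legitimate since both factors are increasing in $B$), prove monotonicity in $N$ on the stated range, and evaluate at the extremal point ($N=21$ for (i), the limit $N\to\infty$ for (iv)). You do name exactly this remedy as a fallback for (iv); you should commit to it, and apply it to (i) as well, since there is essentially no slack to absorb a cruder estimate.
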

\begin{proof}
(i) It is routine to check that $|(1-\zeta_N)/(1-B^{1/DN})|=
2\sin(\pi/N)/(1-e^{-\sqrt{3}\pi/N})$ is a decreasing function for
$N\geq21$. Hence its value is maximal when
$N=21$, which is less than $1.306$.\\
(ii) $|(1-\zeta_N)/(1-\zeta_N^s)|=|\sin(\pi/N)/\sin(\pi s/N)|\leq1$
for all $s\in\mathbb{Z}\setminus N\mathbb{Z}$.\\
(iii) If $N\geq4$ and $2\leq s\leq N/2$, then
$|\sin(s\pi/N)|\geq\sin(2\pi/N)$. Thus
\begin{eqnarray*}
\bigg|\frac{1-\zeta_N}{1-\zeta_N^s}\bigg|=
\bigg|\frac{\sin(\pi/N)}{\sin(s\pi/N)}\bigg|\leq
\frac{\sin(\pi/N)}{\sin(2\pi/N)}=\frac{1}{2\cos(\pi/N)}\leq
\frac{1}{2\cos(\pi/4)}=\frac{1}{\sqrt{2}}.
\end{eqnarray*}
(iv) Observe that
\begin{eqnarray*}
B^{(1/2)(\mathbf{B}_2(0)-\mathbf{B}_2(1/N))}\bigg|\frac{1-\zeta_N}{1-B^{1/N}}\bigg|
\leq e^{(-\sqrt{7}\pi/2)(1/N-1/N^2)}\frac{2\sin(\pi/N)}
{1-e^{-\sqrt{7}\pi/N}}\quad\textrm{by (\ref{B})}.
\end{eqnarray*}
It is also routine to check the last term on $N$ ($\geq2$) is less
than
$0.76$.\\
(v) By (\ref{B}) the inequality is equivalent to $e^{-\sqrt{3}\pi
X}+e^{-3\sqrt{3}\pi X/103}<1$,
which obviously holds for $X\geq1/2$.\\
(vi) The given inequality is equivalent to $B^X+B^{3X/103}<1$. By
(\ref{B}) it suffices to show $e^{-\sqrt{7}\pi X}+e^{-3\sqrt{7}\pi
X/103}<1$, which is true for all $X\geq1/2$.
\end{proof}

\begin{lemma}\label{newlemma1}
Let $N\geq21$ and $Q=[a,b,c]$ be a reduced quadratic form of
discriminant $d_K$. If $a\geq2$, then the inequality
\begin{equation*}
|g_{(0,1/N)}(\theta)|< |g_{(r/N,s/N)}(\theta_Q)|
\end{equation*}
holds for $(r,s)\in\mathbb{Z}^2\setminus N\mathbb{Z}^2$.
\end{lemma}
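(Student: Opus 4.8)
The plan is to estimate both sides directly from the product expansion (\ref{FourierSiegel}). Taking absolute values removes the unit factor $e^{\pi ir_2(r_1-1)}$, and since $q_\theta=\pm B$ while $\mathrm{Im}\,\theta_Q=\sqrt{-d_K}/2a$ forces $|q_{\theta_Q}|=B^{1/a}$, I would record the two expansions
\begin{equation*}
|g_{(0,1/N)}(\theta)|=B^{1/12}\,|1-\zeta_N|\prod_{n\geq1}|1-q_\theta^n\zeta_N|\,|1-q_\theta^n\zeta_N^{-1}|,
\end{equation*}
and on the right $|q_z|=B^{(r/N)/a}$ with leading factor $B^{(1/2a)\mathbf{B}_2(r/N)}|1-q_z|$. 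The crucial structural point is that, by (\ref{bound a}), we have $a\leq D$, so every power $B^{m/a}$ occurring on the right is at most $B^{m/D}$, and $B^{1/D}=e^{-\sqrt{3}\pi}$ is an absolute constant by (\ref{B}); this is exactly what will make the bounds uniform in $K$. Note also that $a\geq2$ together with $a\leq D$ forces $D\geq2$, which one may invoke for extra room if needed.

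Before estimating I would shrink the range of $(r,s)$. Since the identities of Proposition~\ref{F_N}, after taking $12N$-th roots of absolute values, force $|g_{(r_1,r_2)}(\tau)|$ to depend only on $(\langle r_1\rangle,\langle r_2\rangle)$ and to be invariant under $(r_1,r_2)\mapsto(-r_1,-r_2)$, I may assume $0\leq r\leq N/2$ and $0\leq s<N$ with $(r,s)\neq(0,0)$. I would then bound the two infinite products termwise: each left factor satisfies $|1-q_\theta^n\zeta_N^{\pm1}|\leq1+B^n$, so the left product is at most $\exp\!\big(2B/(1-B)\big)$ by (\ref{basicinequality}); each right factor satisfies $|1-q_{\theta_Q}^nq_z|\geq1-B^{(n+r/N)/a}$ and $|1-q_{\theta_Q}^nq_z^{-1}|\geq1-B^{(n-r/N)/a}$, both positive since $n\geq1>r/N$, while $|1-q_z|\geq1-B^{(r/N)/a}$. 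The only phase-dependent quantity, $q_z$, never needs to be tracked because I use $|1-q_z|\geq1-|q_z|$ throughout. What remains is to show that the quotient of leading terms times the quotient of product bounds stays below $1$.

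I would split into two cases. If $r=0$, then $|1-q_z|=|1-\zeta_N^s|$, the leading quotient is $B^{1/12-\mathbf{B}_2(0)/2a}=B^{1/12-1/12a}\leq B^{1/24}$, and Lemma~\ref{ineq}(ii) gives $|1-\zeta_N|/|1-\zeta_N^s|\leq1$ (with Lemma~\ref{ineq}(iii) supplying extra room when $s\geq2$); bounding the product quotient by replacing each $B^{n/a}$ with $B^{n/D}$ and then applying Lemma~\ref{ineq}(v) with (\ref{basicinequality}) keeps it just above $1$, and $B^{1/24}\leq e^{-\sqrt{7}\pi/24}$ closes the gap. If $1\leq r\leq N/2$, then $|1-q_z|\geq1-B^{(r/N)/a}\geq1-B^{1/(DN)}$ (using $(r/N)/a\geq1/(DN)$), so Lemma~\ref{ineq}(i) bounds $|1-\zeta_N|/|1-q_z|$ by $1.306$; the leading power is $B^{1/12-\mathbf{B}_2(r/N)/2a}$, whose exponent is at least $1/24$ because $\mathbf{B}_2(r/N)\leq1/6$ and $a\geq2$; and the product quotient is again controlled through Lemma~\ref{ineq}(v) after passing from $a$ to $D$.

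The delicate point, and the step I expect to be the main obstacle, is the second case, because the two competing quantities pull in opposite directions as $\rho=r/N$ varies over $(0,1/2]$. As $\rho\to0$ the leading power is worst, no better than $B^{1/24}$, but the product quotient is harmless; as $\rho\to1/2$ the factor $1-B^{(1-r/N)/a}$ is smallest, inflating the product quotient to roughly $\exp\!\big((B^{(1/2)/D}{}^{1/1.03}+\cdots)\big)$, yet there $\mathbf{B}_2(r/N)\to-1/12$ drives the leading power down to $B^{1/12}$. One therefore cannot bound the two quotients by their separate worst cases; instead I would treat the full expression as a single function of $\rho$ (with the least favorable admissible $a$), use $B\leq e^{-\sqrt{7}\pi}$ and the exact value $B^{1/D}=e^{-\sqrt{3}\pi}$ to reduce it to an explicit one-variable estimate, and check that it stays below $1$ for $N\geq21$. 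The margin is genuinely slim, and the precise constants of Lemma~\ref{ineq}, especially the $1.306$ of part (i) and the factor $1.03$ in parts (v)--(vi), are exactly what make this trade-off succeed.
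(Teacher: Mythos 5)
Your setup is essentially identical to the paper's: the same reduction to $0\leq r\leq N/2$ via Proposition \ref{F_N}, the same use of $2\leq a\leq D$ from (\ref{bound a}), the same termwise bounds $|1-q_\theta^n\zeta_N^{\pm1}|\leq 1+B^n$ on the left and $|1-q_{\theta_Q}^nq_z|\geq 1-B^{(n+r/N)/a}$, $|1-q_{\theta_Q}^nq_z^{-1}|\geq 1-B^{(n-r/N)/a}$ on the right, and the same appeal to Lemma \ref{ineq}(i), (ii), (v) together with (\ref{basicinequality}). The one place you diverge is the final assembly, and there your diagnosis is mistaken: you assert that one \emph{cannot} bound the leading-power quotient and the infinite-product quotient by their separate worst cases, but that is exactly what the paper does, and it closes. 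Concretely, the paper bounds the leading power by $B^{1/24}$ (worst case $a=2$, $r=0$), the middle factor by $1.306$ (or by $1$ when $r=0$), and the product quotient by replacing each denominator factor by its own worst case over $a\leq D$ and $0\leq r/N\leq 1/2$ --- i.e.\ by $\prod_{n\geq1}(1+B^n)^2(1-B^{n/D})^{-1}(1-B^{(n-1/2)/D})^{-1}$, which after Lemma \ref{ineq}(v) and (\ref{basicinequality}) is at most $e^{2B/(1-B)+(B^{1/1.03D}+B^{1/2.06D})/(1-B^{1/1.03D})}$. Using $B\leq e^{-\sqrt{7}\pi}$ and $B^{1/D}=e^{-\sqrt{3}\pi}$ from (\ref{B}), this exponential is about $1.081$, while $1.306\,e^{-\sqrt{7}\pi/24}\approx 0.924$, so the combined bound is about $0.998<1$. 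The margin is thin, as you anticipated, but no trade-off analysis in $\rho=r/N$ is required; indeed the paper's product bound is even more conservative than a genuine worst case, since it uses $r=0$ for the $(n+r/N)$-factors and $r=N/2$ for the $(n-r/N)$-factors simultaneously. As written, your proposal stops short of the decisive numerical verification --- you defer it to an unexecuted one-variable study of $\rho$ --- so you should either carry that analysis out or, more simply, multiply your three worst-case bounds together and check the single explicit inequality above. Everything preceding that point is correct and matches the paper.
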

\begin{proof}
We may assume $0\leq r\leq N/2$ by Proposition \ref{F_N}. And, note
that $2\leq a\leq D$ by (\ref{bound a}). From (\ref{FourierSiegel})
we obtain that
\begin{eqnarray*}
&&\bigg|\frac{g_{(0,1/N)}(\theta)}{g_{(r/N,s/N)}(\theta_Q)}\bigg|
=\bigg|\frac{g_{(0,1/N)}(\theta)}{g_{(r/N,s/N)}((-b+\sqrt{d_K})/2a)}
\bigg|\\
&\leq&B^{(1/2)(\mathbf{B}_2(0)-(1/a)\mathbf{B}_2(r/N))}
\bigg|\frac{1-\zeta_N}{1-e^{2\pi
i((r/N)(-b+\sqrt{d_K})/2a+s/N)}}\bigg|\\
&&\times\prod_{n=1}^\infty
\frac{(1+B^n)^2}{(1-B^{(1/a)(n+r/N)})(1-B^{(1/a)(n-r/N)})}.
\end{eqnarray*}
If $r\neq0$, then by the fact $2\leq a\leq D$ and Lemma
\ref{ineq}(i),
\begin{eqnarray*}
\bigg|\frac{1-\zeta_N}{1-e^{2\pi
i((r/N)(-b+\sqrt{d_K})/2a+s/N)}}\bigg| \leq
\bigg|\frac{1-\zeta_N}{1-B^{r/Na}}\bigg| \leq
\bigg|\frac{1-\zeta_N}{1-B^{1/ND}}\bigg|<1.306.
\end{eqnarray*}
If $r=0$, then by Lemma \ref{ineq}(ii),
\begin{eqnarray*}
\bigg|\frac{1-\zeta_N}{1-e^{2\pi
i((r/N)(-b+\sqrt{d_K})/2a+s/N)}}\bigg|=
\bigg|\frac{1-\zeta_N}{1-\zeta_N^s}\bigg| \leq 1.
\end{eqnarray*}
Therefore,
\begin{eqnarray*}
&&\bigg|\frac{g_{(0,1/N)}(\theta)}{g_{(r/N,s/N)}(\theta_Q)}
\bigg|\\
&<& B^{(1/2)(\mathbf{B}_2(0)-(1/2)\mathbf{B}_2(0))}\cdot
1.306\cdot\prod_{n=1}^\infty\frac{(1+B^{n})^2}
{(1-B^{n/D})(1-B^{(1/D)(n-1/2)})}\\
&&\textrm{since $2\leq a\leq D$, $0\leq r\leq N/2$}\\
&<& 1.306B^{1/24}\prod_{n=1}^\infty (1+B^{n})^2(1+B^{n/1.03D})
(1+B^{(1/1.03D)(n-1/2)})\quad\textrm{by Lemma \ref{ineq}(v)}\\
&<& 1.306B^{1/24}\prod_{n=1}^\infty
e^{2B^{n}+B^{n/1.03D}+B^{(1/1.03D)(n-1/2)}}
\quad\textrm{by (\ref{basicinequality})}\\
&=&1.306B^{1/24}e^{2B/(1-B)+
(B^{1/1.03D}+B^{1/2.06D})/(1-B^{1/1.03D})}\\
&\leq& 1.306e^{-\sqrt{7}\pi/24}
e^{2e^{-\sqrt{7}\pi}/(1-e^{-\sqrt{7}\pi})+
(e^{-\sqrt{3}\pi/1.03}+e^{-\sqrt{3}\pi/2.06})/
(1-e^{-\sqrt{3}\pi/1.03})}<1\quad\textrm{by (\ref{B})}.
\end{eqnarray*}
This proves the lemma.
\end{proof}

\begin{lemma}\label{newlemma2}
 Let $N\geq2$ and $Q=[1,b,c]$ be a reduced quadratic form of discriminant $d_K$. Then
we have the inequality
\begin{equation*}
|g_{(0,1/N)}(\theta)|< |g_{(r/N,s/N)}(\theta_Q)|
\end{equation*}
for $r,~s\in\mathbb{Z}$ with $r\not\equiv0\pmod{N}$.
\end{lemma}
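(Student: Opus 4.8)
The plan is to follow the template of the proof of Lemma~\ref{newlemma1}: estimate the ratio $|g_{(0,1/N)}(\theta)/g_{(r/N,s/N)}(\theta_Q)|$ directly from the product expansion~(\ref{FourierSiegel}) and show it is $<1$. The two features that distinguish the present case are that $a=1$, so the damping factor $1/a$ that controlled the denominators in Lemma~\ref{newlemma1} is gone, and that $N$ may be as small as $2$, so Lemma~\ref{ineq}(i) is unavailable and must be replaced throughout by Lemma~\ref{ineq}(iv). First I would invoke Proposition~\ref{F_N} together with the hypothesis $r\not\equiv0\pmod N$ to replace $(r,s)$ by a representative with $1\le r\le N/2$, and set $t=r/N\in[1/N,1/2]$. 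Since $Q=[1,b,c]$ gives $\theta_Q=(-b+\sqrt{d_K})/2$, one has $|q_{\theta_Q}|=B=|q_\theta|$; writing $w=(r/N)\theta_Q+s/N$ so that $|q_w|=B^{t}$, the expansion~(\ref{FourierSiegel}) yields
\begin{eqnarray*}
\bigg|\frac{g_{(0,1/N)}(\theta)}{g_{(r/N,s/N)}(\theta_Q)}\bigg|
\le B^{(1/2)(\mathbf{B}_2(0)-\mathbf{B}_2(t))}\,
\frac{|1-\zeta_N|}{1-B^{t}}\,
\prod_{n=1}^\infty\frac{(1+B^{n})^2}{(1-B^{n+t})(1-B^{n-t})},
\end{eqnarray*}
where I have used $|1-q_w|\ge1-B^{t}$ together with the elementary bounds $|1-q_\theta^{n}\zeta_N^{\pm1}|\le1+B^{n}$ and $|1-q_{\theta_Q}^{n}q_w^{\pm1}|\ge1-B^{n\pm t}$.

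The key point is that the ``leading $\times$ cyclotomic'' factor $B^{(1/2)(\mathbf{B}_2(0)-\mathbf{B}_2(t))}|1-\zeta_N|/(1-B^{t})$ is, for fixed $N$, a decreasing function of $t$ on $(0,1/2]$: the exponent $(1/2)(\mathbf{B}_2(0)-\mathbf{B}_2(t))=(1/2)t(1-t)$ increases and $1-B^{t}$ increases, while $B<1$. Hence over the admissible values $t\ge1/N$ it is maximal at $t=1/N$, i.e. $r=1$, where it equals
\begin{eqnarray*}
B^{(1/2)(\mathbf{B}_2(0)-\mathbf{B}_2(1/N))}\,\frac{|1-\zeta_N|}{1-B^{1/N}}<0.76
\end{eqnarray*}
by Lemma~\ref{ineq}(iv). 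This single inequality thus bounds the factor for every admissible $r$ at once, which is exactly what the hypothesis $r\not\equiv0\pmod N$ makes possible (the excluded case $r\equiv0\pmod N$ makes $|q_w|=1$, degenerating this estimate, and the strict inequality can then genuinely fail).

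For the infinite product, since $n\ge1$ and $0<t\le1/2$ force $n-t\ge1/2$, Lemma~\ref{ineq}(vi) applies to both denominator factors, giving
\begin{eqnarray*}
\prod_{n=1}^\infty\frac{(1+B^{n})^2}{(1-B^{n+t})(1-B^{n-t})}
<\prod_{n=1}^\infty(1+B^{n})^2(1+B^{(n+t)/1.03})(1+B^{(n-t)/1.03}).
\end{eqnarray*}
Applying~(\ref{basicinequality}), summing the resulting geometric series, and using $t>0$ and $1-t\ge1/2$ to bound $B^{(1+t)/1.03}\le B^{1/1.03}$ and $B^{(1-t)/1.03}\le B^{1/2.06}$, this is at most
\begin{eqnarray*}
\exp\!\bigg(\frac{2B}{1-B}+\frac{B^{1/1.03}+B^{1/2.06}}{1-B^{1/1.03}}\bigg),
\end{eqnarray*}
which by~(\ref{B}) is bounded above by a constant less than $1.02$, uniformly in $t$. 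Combining the two estimates gives the ratio $<0.76\times1.02<1$, as required.

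I expect the main obstacle to be the denominator factor $1-B^{n-t}$, whose exponent $n-t$ drops to its smallest value $1/2$ at $n=1$, $t=1/2$; the entire argument relies on keeping $t=r/N\le1/2$, which is precisely where the reduction via Proposition~\ref{F_N} is indispensable, and on the fact that both Lemma~\ref{ineq}(iv) and (vi) remain valid at the boundary cases $N=2$ and $X=1/2$. The secondary technical point is to justify carefully the monotonicity in $t$ that lets the whole family $1\le r\le N/2$ be replaced by the single worst case $r=1$ in the leading cyclotomic factor.
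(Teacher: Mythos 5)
Your proposal is correct and follows essentially the same route as the paper's proof: reduce to $1\le r\le N/2$ via Proposition \ref{F_N}, bound the ratio by the worst case $r=1$ in the leading factor using Lemma \ref{ineq}(iv), and control the infinite product with Lemma \ref{ineq}(vi), the inequality (\ref{basicinequality}), and the bound (\ref{B}), arriving at the same constant $0.76\cdot e^{2B/(1-B)+(B^{1/1.03}+B^{1/2.06})/(1-B^{1/1.03})}<1$. The only (immaterial) difference is that you apply Lemma \ref{ineq}(vi) before taking the worst case in $t$, and you make explicit the monotonicity argument that the paper leaves implicit.
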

\begin{proof}
We may assume $1\leq r\leq N/2$ by Proposition \ref{F_N}. Then
\begin{eqnarray*}
&&\bigg|\frac{g_{(0,1/N)}(\theta)}{g_{(r/N,s/N)}(\theta_Q)}
\bigg| \\
&\leq& B^{(1/2)(\mathbf{B}_2(0)-\mathbf{B}_2(r/N))}
\bigg|\frac{1-\zeta_N}{1-B^{r/N}}\bigg|
\prod_{n=1}^\infty\frac{(1+B^n)^2}{{(1-B^{n+r/N})}
{(1-B^{n-r/N})}}\quad\textrm{by (\ref{FourierSiegel})}\\
&<& B^{(1/2)(\mathbf{B}_2(0)-\mathbf{B}_2(1/N))}
\bigg|\frac{1-\zeta_N}{1-B^{1/N}}\bigg|\prod_{n=1}^\infty\frac{(1+B^n)^2}{{(1-B^{n})}
{(1-B^{n-1/2})}}\\
&<& 0.76\prod_{n=1}^\infty (1+B^n)^2(1+B^{n/1.03})
(1+B^{(1/1.03)(n-1/2)})\quad\textrm{by Lemma \ref{ineq}(iv) and (vi)}\\
&<& 0.76\prod_{n=1}^\infty
e^{2B^{n}+B^{n/1.03}+B^{(1/1.03)(n-1/2)}}\quad\textrm{by}~(\ref{basicinequality})\\
&=&0.76 e^{2B/(1-B)+
(B^{1/1.03}+B^{1/2.06})/(1-B^{1/1.03})}\\
&\leq& 0.76e^{2e^{-\sqrt{7}\pi}/(1-e^{-\sqrt{7}\pi})+
(e^{-\sqrt{7}\pi/1.03}+e^{-\sqrt{7}\pi/2.06})/
(1-e^{-\sqrt{7}\pi/1.03})}<1\quad\textrm{by (\ref{B})}.
\end{eqnarray*}
\end{proof}

\begin{lemma}\label{newlemma3}
Let $N\geq2$ and $Q=[1,b,c]$ be a reduced quadratic form of
discriminant $d_K$. Then
\begin{equation*}
|g_{(0,1/N)}(\theta)|<|g_{(0,s/N)}(\theta_Q)|
\end{equation*}
for $s\in\mathbb{Z}$ with $s\not\equiv0,~\pm1\pmod{N}$.
\end{lemma}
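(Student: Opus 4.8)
The plan is to follow the same template as in Lemmas \ref{newlemma1} and \ref{newlemma2}: normalize $s$, write the ratio of the two values via the product expansion (\ref{FourierSiegel}), split off the cyclotomic factor, and dominate the remaining infinite product by an exponential. First I would normalize $s$. By Proposition \ref{F_N} the quantity $|g_{(0,s/N)}(\theta_Q)|$ depends only on $\langle s/N\rangle$ and is invariant under $s\mapsto -s$, so the hypothesis $s\not\equiv0,\pm1\pmod N$ lets me assume $2\le s\le N/2$; in particular the statement is nonvacuous only when $N\ge4$, which I may therefore assume.

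Next, since $r_1=0$ in both $g_{(0,1/N)}$ and $g_{(0,s/N)}$, the argument $z$ reduces to the constant $1/N$ (resp.\ $s/N$), so $q_z=\zeta_N$ (resp.\ $\zeta_N^s$); moreover, because $a=1$ the point $\theta_Q=(-b+\sqrt{d_K})/2$ has imaginary part $\sqrt{-d_K}/2$, exactly that of $\theta$, whence $|q_{\theta_Q}|=|q_\theta|=B$. Consequently the two leading factors $q_\theta^{(1/2)\mathbf B_2(0)}$ and $q_{\theta_Q}^{(1/2)\mathbf B_2(0)}$ have the same absolute value $B^{1/12}$ and cancel. Substituting (\ref{FourierSiegel}) and taking absolute values therefore gives
\begin{equation*}
\left|\frac{g_{(0,1/N)}(\theta)}{g_{(0,s/N)}(\theta_Q)}\right|
=\left|\frac{1-\zeta_N}{1-\zeta_N^s}\right|\cdot
\frac{\prod_{n=1}^\infty|1-q_\theta^n\zeta_N|\,|1-q_\theta^n\zeta_N^{-1}|}
{\prod_{n=1}^\infty|1-q_{\theta_Q}^n\zeta_N^s|\,|1-q_{\theta_Q}^n\zeta_N^{-s}|}.
\end{equation*}
The cyclotomic factor is at most $1/\sqrt2$ by Lemma \ref{ineq}(iii), valid in the range $2\le s\le N/2$ with $N\ge4$. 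For the product I would bound each numerator factor above by $1+B^n$ and each denominator factor below by $1-B^n$ (using $|q_\theta|=|q_{\theta_Q}|=B$), so the remaining product is dominated by $\prod_{n=1}^\infty(1+B^n)^2/(1-B^n)^2$.

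Finally I would convert this product into an exponential exactly as in the previous two lemmas: apply Lemma \ref{ineq}(vi) to replace $1/(1-B^n)$ by $1+B^{n/1.03}$, then use (\ref{basicinequality}) to bound $(1+B^n)^2(1+B^{n/1.03})^2<e^{2B^n+2B^{n/1.03}}$, sum the two geometric series, and invoke (\ref{B}) to replace $B$ by $e^{-\sqrt7\pi}$. This yields the numerical estimate $(1/\sqrt2)\exp\!\big(2B/(1-B)+2B^{1/1.03}/(1-B^{1/1.03})\big)<1$, which completes the proof.

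I expect the only delicate point to be the reduction step rather than the estimate itself: the hypothesis $s\not\equiv\pm1$ is precisely what forces $s$ into the interval $[2,N/2]$, where Lemma \ref{ineq}(iii) supplies the crucial factor $1/\sqrt2$. With only the weaker bound $\le1$ available from Lemma \ref{ineq}(ii), the surviving product (being strictly greater than $1$) would not close the estimate, so the whole argument hinges on correctly exploiting the excluded residues $\pm1$ to gain the $1/\sqrt2$ savings.
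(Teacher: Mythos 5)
Your proposal is correct and follows essentially the same route as the paper: reduce to $N\ge4$ and $2\le s\le N/2$ via Proposition \ref{F_N}, bound the cyclotomic factor by $1/\sqrt{2}$ using Lemma \ref{ineq}(iii), dominate the infinite product by $\prod_{n\ge1}(1+B^n)^2/(1-B^n)^2$, and close with Lemma \ref{ineq}(vi), the inequality (\ref{basicinequality}), and (\ref{B}), arriving at the identical final estimate $(1/\sqrt{2})\exp\bigl(2B/(1-B)+2B^{1/1.03}/(1-B^{1/1.03})\bigr)<1$. The only difference is that you spell out explicitly why the leading $q$-power factors cancel (both exponents are $\tfrac12\mathbf{B}_2(0)$ and $|q_{\theta_Q}|=|q_\theta|=B$ since $a=1$), which the paper leaves implicit.
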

\begin{proof}
If $N=2$ or $3$, there is nothing to prove. Thus, let $N\geq4$. Here
we may assume that $2\leq s\leq N/2$ by Proposition \ref{F_N}.
Observe that
\begin{eqnarray*}
&&\bigg|\frac{g_{(0,1/N)}(\theta)}
{g_{(0,s/N)}(\theta_Q)}\bigg|\\
&\leq& \bigg|\frac{1-\zeta_N}{1-\zeta_N^s}\bigg|
\prod_{n=1}^\infty\frac{(1+B^n)^2}{(1-B^n)^2}\quad\textrm{by (\ref{FourierSiegel})}\\
&<& (1/\sqrt{2})\prod_{n=1}^\infty
(1+B^{n})^2(1+B^{n/1.03})^2\quad\textrm{by Lemma \ref{ineq}(iii) and (vi)}\\
&\stackrel{}{<}& \tfrac{1}{\sqrt{2}}\prod_{n=1}^\infty
e^{2B^{n}+2B^{n/1.03}}\quad\textrm{by}~(\ref{basicinequality})\\
&\stackrel{}{=}&
(1/\sqrt{2})e^{2B/(1-B)+2B^{1/1.03}/(1-B^{1/1.03})}\\
&\leq&(1/\sqrt{2})e^{2e^{-\sqrt{7}\pi}/(1-e^{-\sqrt{7}\pi})
+2e^{-\sqrt{7}\pi/1.03}/(1-e^{-\sqrt{7}\pi/1.03})}<1 \quad\textrm{by
(\ref{B})},
\end{eqnarray*}
which proves the lemma.
\end{proof}

Now we are ready to prove our main theorem.

\begin{theorem}\label{primitive}
Let $N\geq21$. Let $K$
\textup($\neq\mathbb{Q}(\sqrt{-1}),\mathbb{Q}(\sqrt{-3})$\textup) be
an imaginary quadratic field and $\theta$ be as in
\textup{(\ref{theta})}. Then for any positive integer $n$ the value
\begin{equation*}
g^{12Nn}(1,N\mathcal{O}_K)=g_{(0,1/N)}^{12Nn}(\theta)
\end{equation*}
generates $K_{(N)}$ over $K$. It is a real algebraic integer and its
minimal polynomial has integer coefficients. In particular, if $N$
has at least two prime factors, then it is an elliptic unit.
\end{theorem}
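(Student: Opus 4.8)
The plan is to prove that $g_{(0,1/N)}^{12Nn}(\theta)$ generates $K_{(N)}$ over $K$ by showing that its stabilizer in $\mathrm{Gal}(K_{(N)}/K)$ is trivial, so that Galois theory forces $K\bigl(g_{(0,1/N)}^{12Nn}(\theta)\bigr)=K_{(N)}$. Write $\alpha=g_{(0,1/N)}^{12N}(\theta)$, so the value in question is $\alpha^n$. By Theorem \ref{conjugate} every element of $\mathrm{Gal}(K_{(N)}/K)$ is of the form $\sigma_{(\beta,Q)}$ for a unique pair $(\beta,Q)\in W_{N,\theta}/\{\pm1_2\}\times\mathrm{C}(d_K)$, and since $g_{(0,1/N)}^{12N}\in\mathcal{F}_N$ is finite and nonzero at $\theta$, Proposition \ref{F_N} gives $\sigma_{(\beta,Q)}(\alpha)=g_{(0,1/N)\beta u_Q}^{12N}(\theta_Q)=g_{(r/N,s/N)}^{12N}(\theta_Q)$, where $(r/N,s/N)\equiv(0,1/N)\beta u_Q\pmod{\mathbb{Z}^2}$. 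The key observation is that if $\sigma_{(\beta,Q)}$ fixes $\alpha^n$, then $|\sigma_{(\beta,Q)}(\alpha)|^n=|\alpha|^n$ and hence $|g_{(r/N,s/N)}(\theta_Q)|=|g_{(0,1/N)}(\theta)|$; since this equality sees only absolute values it is independent of $n$, which is exactly why the conclusion holds for every power $n$ at once.

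Next I would run the case analysis against Lemmas \ref{newlemma1}--\ref{newlemma3}, checking that every non-identity pair is excluded. Because $\beta u_Q$ is invertible modulo $N$ and $(0,1)\not\equiv0\pmod N$, the vector $(0,1/N)\beta u_Q$ never lies in $\mathbb{Z}^2$, i.e. $(r,s)\in\mathbb{Z}^2\setminus N\mathbb{Z}^2$. If the reduced form $Q=[a,b,c]$ has $a\geq2$, then Lemma \ref{newlemma1} (where the hypothesis $N\geq21$ enters) yields the strict inequality $|g_{(0,1/N)}(\theta)|<|g_{(r/N,s/N)}(\theta_Q)|$, a contradiction; hence $Q$ is the principal form $Q_0$, for which the definitions in Proposition \ref{Hilbertclass} give $\theta_Q=\theta$ and $u_Q=1_2$. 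Writing $\beta=\left(\begin{smallmatrix}t-B_\theta u&-C_\theta u\\u&t\end{smallmatrix}\right)$ I compute $(0,1/N)\beta=(u/N,t/N)$. If $u\not\equiv0\pmod N$, Lemma \ref{newlemma2} again forces a strict inequality, so $u\equiv0$ and $\beta=t\cdot1_2$ with $t$ a unit mod $N$; and if $t\not\equiv\pm1\pmod N$, Lemma \ref{newlemma3} excludes this case, leaving $t\equiv\pm1$, i.e. $\beta=\pm1_2$. Thus $(\beta,Q)=(\pm1_2,Q_0)$, which corresponds to the identity by Remark \ref{identitycorrespond}, so the stabilizer of $\alpha^n$ is trivial and generation follows.

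For reality I would substitute $(r_1,r_2)=(0,1/N)$ into \eqref{FourierSiegel}: after the $12N$-th power the prefactor contributes $q_\theta^{N}$, and since $\theta$ is either purely imaginary or of the form $(-1+\sqrt{d_K})/2$ the number $q_\theta=e^{2\pi i\theta}$ is real, so complex conjugation merely interchanges the product factors $(1-q_\theta^n\zeta_N)$ and $(1-q_\theta^n\zeta_N^{-1})$ and replaces $(1-\zeta_N)^{12N}$ by $(1-\zeta_N^{-1})^{12N}=(1-\zeta_N)^{12N}$ (using $\zeta_N^N=1$); hence $\overline{\alpha}=\alpha$. For the integrality I would invoke the standard fact that the singular values of the Siegel functions $g_{(r_1,r_2)}^{12N}$ at CM points are algebraic integers (\cite{K-L}), so that $\alpha$ and each conjugate $g_{(r/N,s/N)}^{12N}(\theta_Q)$ is an algebraic integer.

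Finally, the coefficients of the minimal polynomial of $\alpha$ over $K$ are the elementary symmetric functions of the conjugates $\{\sigma(\alpha)\}$, hence lie in $K\cap\overline{\mathbb{Z}}=\mathcal{O}_K$. Since $K_{(N)}$ is stable under complex conjugation $c$ and $c|_K$ generates $\mathrm{Gal}(K/\mathbb{Q})$, conjugation normalizes $\mathrm{Gal}(K_{(N)}/K)$, so $c(\sigma(\alpha))=(c\sigma c^{-1})(c(\alpha))=(c\sigma c^{-1})(\alpha)$; thus $c$ permutes the set of $K$-conjugates and fixes each symmetric function. Being simultaneously real and in $\mathcal{O}_K$, these coefficients lie in $\mathcal{O}_K\cap\mathbb{R}=\mathbb{Z}$. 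The elliptic-unit assertion when $N$ has at least two prime factors I would deduce from the unit criterion for these Siegel--Ramachandra invariants (\cite{K-L} or \cite{Ramachandra}), under which $g_{(0,1/N)}^{12N}(\theta)$ is a genuine unit rather than merely an algebraic integer. The main obstacle is making the case analysis exhaustive: one must verify that every non-identity pair $(\beta,Q)$ falls under the hypotheses of exactly one of Lemmas \ref{newlemma1}--\ref{newlemma3}, which hinges on correctly tracking the row vector $(0,1/N)\beta u_Q$ and on the reduction to $Q=Q_0$ forcing $\beta$ to be scalar.
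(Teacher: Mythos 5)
Your proposal is correct and follows essentially the same route as the paper: reduce to showing the stabilizer of the singular value is trivial via Theorem \ref{conjugate}, eliminate $a\geq2$ by Lemma \ref{newlemma1}, then force $s\equiv0$ and $t\equiv\pm1\pmod N$ by Lemmas \ref{newlemma2} and \ref{newlemma3}, and handle reality, integrality and the unit property from the product expansion and the integrality of Siegel functions over $\mathbb{Z}[j]$. Your explicit argument that complex conjugation normalizes $\mathrm{Gal}(K_{(N)}/K)$ and permutes the conjugates, so that the coefficients lie in $\mathcal{O}_K\cap\mathbb{R}=\mathbb{Z}$, spells out a step the paper leaves implicit, but the overall approach is the same.
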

\begin{proof}
For simplicity we put $g(\tau)=g_{(0,1/N)}^{12Nn}(\tau)$. Since $g$
belongs to $\mathcal{F}_N$ by Proposition \ref{F_N}, the value
$g(\theta)$ lies in $K_{(N)}$ by the main theorem of the theory of
complex multiplication. Hence, if we show that the only element of
$\mathrm{Gal}(K_{(N)}/K)$ fixing the value $g(\theta)$ is the unit
element, then we can conclude that it generates $K_{(N)}$ over $K$
by Galois theory.
\par By Theorem \ref{conjugate} any conjugate of $g(\theta)$ is of
the form $g^{\alpha\cdot u_Q}(\theta_Q)$ for some
$\alpha=\left(\begin{smallmatrix}t-B_\theta s&-C_\theta
s\\s&t\end{smallmatrix}\right)\in W_{N,\theta}$ and a reduced
quadratic form $Q=[a,b,c]$ of discriminant $d_K$. Assume that
$g(\theta)=g^{\alpha\cdot u_Q}(\theta_Q)$. Then Lemma
\ref{newlemma1} implies that $a=1$, which yields
\begin{eqnarray*}
u_Q=\left\{\begin{array}{ll}\begin{pmatrix}1&b/2\\0&1\end{pmatrix}
&\textrm{for}~d_K\equiv0\pmod{4}\vspace{0.1cm}\\
\begin{pmatrix}1&(b-1)/2\\0&1\end{pmatrix}
&\textrm{for}~d_K\equiv1\pmod{4}\end{array}\right.
\end{eqnarray*}
as an element of $\mathrm{GL}_2(\mathbb{Z}/N\mathbb{Z})$ by
(\ref{u1}) and (\ref{u2}). It follows from Proposition \ref{F_N}
that
\begin{eqnarray*}
g(\theta)=g^{\alpha\cdot u_Q}(\theta_Q)=g_{(0,1/N)\alpha
u_Q}^{12Nn}(\theta_Q)= \left\{\begin{array}{ll} g_{(s/N,(s/N)(b/2)+t/N)}^{12Nn}(\theta_Q)&\textrm{for}~d_K\equiv0\pmod{4}\vspace{0.1cm}\\
g_{(s/N,(s/N)(b-1)/2+t/N)}^{12Nn}(\theta_Q)&\textrm{for}~
d_K\equiv1\pmod{4},
\end{array}\right.
\end{eqnarray*}
from which we get $s\equiv0\pmod{N}$ by Lemma \ref{newlemma2}. Now
Lemma \ref{newlemma3} implies that $t\equiv\pm1\pmod{N}$, which
shows that $\alpha$ is the unit element of
$W_{N,~\theta}/\{\pm1_2\}$. Finally (\ref{reduced})implies that
\begin{eqnarray*}
Q=[a,b,c]=\left\{\begin{array}{ll} \textrm{[}1,0,-d_K/4\textrm{]} &
\textrm{for}~d_K\equiv0\pmod{4}\vspace{0.1cm}\\
\textrm{[}1,1,(1-d_K)/4\textrm{]} & \textrm{for}~
d_K\equiv1\pmod{4},\end{array}\right.
\end{eqnarray*}
which represents the unit element of $\mathrm{C}(d_K)$ by
(\ref{unitform}). This implies that $(\alpha,Q)\in
W_{N,\theta}/\{\pm1_2\}\times\mathrm{C}(d_K)$ represents the unit
element of $\mathrm{Gal}(K_{(N)}/K)$ by Remark
\ref{identitycorrespond}. Therefore $g(\theta)$ actually generates
$K_{(N)}$ over $K$.
\par
From (\ref{FourierSiegel}) we have
\begin{eqnarray*}
g(\theta)&=&\bigg(q_\theta^{1/12}(1-\zeta_N)\prod_{n=1}^\infty(1-q_\theta^n\zeta_N)(1-q_\theta^n\zeta_N^{-1})\bigg)^{12Nn}\\
&=&q_\theta^{Nn}(2\sin(\pi/N))^{12Nn}
\prod_{n=1}^\infty(1-(\zeta_N+\zeta_N^{-1})q_\theta^n+q_\theta^{2n})^{12Nn},
\end{eqnarray*}
and this shows that $g(\theta)$ is a real number. Furthermore, we
see from \cite[$\S$3]{K-S} that the function $g(\tau)$ is integral
over $\mathbb{Z}[j(\tau)]$. Since $j(\theta)$ is a real algebraic
integer (\cite[Chapter 5 Theorem 4]{Lang}), so is the value
$g(\theta)$, and its minimal polynomial over $K$ has integer
coefficients. In particular, if $N$ has at least two prime factors,
the function $1/g(\tau)$ is also integral over $\mathbb{Z}[j(\tau)]$
(\cite[Chapter 2 Theorem 2.2]{K-L}); hence $g(\theta)$ becomes a
unit.
\end{proof}

\begin{remark}\label{exception}
\begin{itemize}
\item[(i)] If we assume that
\begin{equation}\label{newcondition}
( N=2,~d_K\leq-43)\quad\textrm{or}\quad
(N=3,~d_K\leq-39)\quad\textrm{or}\quad (N\geq4,~d_K\leq-31),
\end{equation}
then the upper bounds of the inequalities appeared in Lemma
\ref{ineq} should be slightly changed. But it is routine to check
that Lemmas \ref{newlemma1}--\ref{newlemma3} are also true.
Therefore we can establish Theorem \ref{primitive} again under the
condition (\ref{newcondition}), however, we shall not repeat the
similar proofs.
\item[(ii)]
Theorem \ref{primitive} is still valid for all $N\geq2$ and
$K\neq\mathbb{Q}(\sqrt{-1}),\mathbb{Q}(\sqrt{-3})$. Indeed, for the
remaining finite cases $(N=2,~-40\leq d_K\leq-7)$, $(N=3,~-35\leq
d_K\leq-7)$, $(4\leq N\leq20,~-24\leq d_K\leq-7)$ one can readily
verify Lemmas \ref{newlemma1}--\ref{newlemma3} by Theorem
\ref{conjugate} and some numerical estimation, not by using Lemma
\ref{ineq}.
\end{itemize}
\end{remark}

\begin{remark}\label{exponent}
\begin{itemize}
\item[(i)] For $N\geq2$ and $(r_1,r_2)\in(1/N)\mathbb{Z}^2\setminus
\mathbb{Z}^2$, the function $g_{(r_1,r_2)}^{12N/\gcd(6,N)}(\tau)$
belongs to $\mathcal{F}_N$ and satisfies the same transformation
formulas as in Proposition \ref{F_N} by \cite[Theorem 2.5 and
Proposition 2.4]{K-S}. Hence we are able to replace the value
$g_{(0,1/N)}^{12Nn}(\theta)$ in Theorem \ref{primitive} by
$g_{(0,1/N)}^{12Nn/\gcd(6,N)}(\theta)$ with smaller exponent, which
enables us to have class polynomials with relatively small
coefficients.
\item[(ii)] Nevertheless, the exponent of
$g_{(0,1/N)}^{12N/\gcd(6,N)}(\theta)$ could be quite high for
numerical computations. So one usually takes suitable products of
Siegel functions with lower exponents (\cite{B-S}).
\item[(iii)] In order to prove that the singular value
$g_{(0,1/N)}^{12N/\gcd(6,N)}(\theta)$ is a unit, it suffices to
check whether $N$ has more than one prime ideal factor in $K$
(\cite[$\S$6]{Ramachandra}).
\end{itemize}
\end{remark}

Now we close this section by presenting an example which illustrates
Theorem \ref{primitive}, Remarks \ref{exception} and \ref{exponent}.

\begin{example}
Let $K=\mathbb{Q}(\sqrt{-10})$ and $N=6$ ($=2\cdot3$). Then
$d_K=-40$ and $\theta=\sqrt{-10}$. The reduced quadratic forms of
discriminant $d_K$ are
\begin{equation*}
Q_1=[1,~0,~10]\quad\textrm{and}\quad Q_2=[2,~0,~5].
\end{equation*}
So we have
\begin{eqnarray*}
\theta_{Q_1}=\sqrt{-10},~u_{Q_1}=\begin{pmatrix}1&0\\0&1\end{pmatrix}\quad\textrm{and}\quad
\theta_{Q_2}=\sqrt{-10}/2,~u_{Q_2}=\begin{pmatrix}2&-3\\3&4\end{pmatrix}.
\end{eqnarray*}
Furthermore, one can compute the group $W_{6,\theta}/\{\pm1_2\}$
easily and the result is as follows:
\begin{eqnarray*}
W_{6,\theta}/\{\pm1_2\}=\bigg\{
\begin{pmatrix}1&0\\0&1\end{pmatrix},
\begin{pmatrix}1&2\\1&1\end{pmatrix},
\begin{pmatrix}1&4\\2&1\end{pmatrix},
\begin{pmatrix}1&0\\3&1\end{pmatrix},
\begin{pmatrix}1&2\\4&1\end{pmatrix},
\begin{pmatrix}1&4\\5&1\end{pmatrix},
\begin{pmatrix}3&2\\1&3\end{pmatrix},
\begin{pmatrix}3&4\\2&3\end{pmatrix}
\bigg\}.
\end{eqnarray*}
\,Thus the class polynomial is
\begin{eqnarray*}
&&\mathrm{min}(g_{(0,1/6)}^{12}(\theta),K)=
\prod_{r=1}^2\prod_{\alpha\in
W_{6,\theta}/\{\pm1_2\}}(X-g_{(0,1/6)\alpha
u_{Q_r}}^{12}(\theta_{Q_r}))\\
&=&(X-g_{(0,1/6)}^{12}(\sqrt{-10}))
(X-g_{(1/6,1/6)}^{12}(\sqrt{-10}))
(X-g_{(2/6,1/6)}^{12}(\sqrt{-10}))\\
&&(X-g_{(3/6,1/6)}^{12}(\sqrt{-10}))
(X-g_{(4/6,1/6)}^{12}(\sqrt{-10}))
(X-g_{(5/6,1/6)}^{12}(\sqrt{-10}))\\
&&(X-g_{(1/6,3/6)}^{12}(\sqrt{-10}))
(X-g_{(2/6,3/6)}^{12}(\sqrt{-10}))
(X-g_{(3/6,4/6)}^{12}(\sqrt{-10}/2))\\
&&(X-g_{(5/6,1/6)}^{12}(\sqrt{-10}/2))
(X-g_{(1/6,4/6)}^{12}(\sqrt{-10}/2))
(X-g_{(3/6,1/6)}^{12}(\sqrt{-10}/2))\\
&&(X-g_{(5/6,4/6)}^{12}(\sqrt{-10}/2))
(X-g_{(1/6,1/6)}^{12}(\sqrt{-10}/2))
(X-g_{(5/6,3/6)}^{12}(\sqrt{-10}/2))\\
&&(X-g_{(1/6,0)}^{12}(\sqrt{-10}/2))\\
&=&X^{16}+20560X^{15}-1252488X^{14}-829016560X^{13}-8751987701092X^{12}\\
&&+217535583987600X^{11}+181262520621110344X^{10}+43806873084101200X^9\\
&&-278616280004972730X^8+139245187265282800X^7-8883048242697656X^6\\
&&+352945014869040X^5+23618989732508X^4-1848032773840X^3+49965941112X^2\\
&&-425670800X+1,
\end{eqnarray*}
which shows that $g_{(0,1/6)}^{12}(\theta)$ is also a unit.
\end{example}


\end{document}